\newcommand{\1}{\mathbf{1}}
\newcommand{\eps}{\epsilonup}
\newcommand{\tn}[1]{\textnormal{#1}}
\newcommand{\mrm}[1]{\mathrm{#1}}
\newcommand{\mc}[1]{\mathcal{#1}}
\newcommand{\wh}{\widehat}
\newcommand{\wt}{\widetilde}
\newcommand{\N}{\mathbb{N}}
\newcommand{\R}{\mathbb{R}}
\newcommand{\diag}{\mathrm{diag}}
\newcommand{\T}{\mathsf{T}}
\newcommand{\eqd}{\stackrel{D}{=}}
\newcommand{\var}{\mathrm{Var}}
\newcommand{\cov}{\mathrm{Cov}}
\newcommand{\sto}[2]{\stackrel[#2]{#1}{\longrightarrow}}
\newcommand{\ston}[1]{\stackrel[n\to\infty]{#1}{\longrightarrow}}
\renewcommand{\sp}[2]{\left\langle #1,#2 \right\rangle}
\newcommand{\norm}[1]{\left\|#1\right\|}
\newcommand{\bpm}{\begin{pmatrix}}
\newcommand{\epm}{\end{pmatrix}}%
\newtheorem{theorem}{Theorem}
\newtheorem{proposition}{Proposition}[section]
\newtheorem{lemma}{Lemma}[section]
\newtheorem{corollary}{Corollary}
\theoremstyle{definition}
\begin{document}

\title{Limit Theory for the largest eigenvalues of sample covariance matrices with heavy-tails}
\author[R.A. Davis]{Richard A. Davis}
\address{Department of Statistics\\ Columbia University \\ 1255 Amsterdam Avenue \\ New York, New York 10027 \\ USA}
\email{rdavis@stat.columbia.edu}
\author[O. Pfaffel]{Oliver Pfaffel}
\address{Fakultät für Mathematik \& TUM-IAS \\ Technische Universität München \\ Boltzmannstraße 3 \\ 85748 Garching b. München \\ Germany}
\email{o.pfaffel@gmx.de}
\author[R. Stelzer]{Robert Stelzer}
\address{Institute of Mathematical Finance \\ Ulm University\\ Helmholtzstraße 18 \\ 89081 Ulm \\ Germany}
\email{robert.stelzer@uni-ulm.de}

%\documentclass[a4paper,reqno,11pt,final]{elsarticle}
%
%\usepackage[left=3cm, right=3cm, top=1cm]{geometry} 
%
%\begin{document}
%
%\title{Limit theory for the largest eigenvalues of sample covariance matrices with heavy-tails}
%\author[COLUMBIA]{Richard A. Davis}
%\ead{rdavis@stat.columbia.edu}
%\author[TUM]{Oliver Pfaffel\corref{cor1}}
%\ead{o.pfaffel@gmx.de}
%\author[ULM]{Robert Stelzer\corref{cor2}}
%\ead{robert.stelzer@uni-ulm.de}
%
%\address[COLUMBIA]{Department of Statistics, Columbia University, New York, USA}
%\address[TUM]{TUM Institute for Advanced Study \& Department of Mathematics, Technische Universit\"at M\"unchen, Germany}
%\address[ULM]{Institute of Mathematical Finance, Ulm University, Germany}
%\cortext[cor1]{Corresponding author}
%

\begin{abstract}
We study the joint limit distribution of the $k$ largest eigenvalues of a $p\times p$ sample covariance matrix $XX^\T$ based on a large $p\times n$ matrix $X$. The rows of $X$ are given by independent copies of a linear process, $X_{it}=\sum_j c_j Z_{i,t-j}$, with regularly varying noise $(Z_{it})$ with tail index $\alpha\in(0,4)$. It is shown that a point process based on the eigenvalues of $XX^\T$ converges, as $n\to\infty$ and $p\to\infty$ at a suitable rate, in distribution to a Poisson point process with an intensity measure depending on $\alpha$ and $\sum c_j^2$. This result is extended to random coefficient models where the coefficients of the linear processes $(X_{it})$ are given by $c_j(\theta_i)$, for some ergodic sequence $(\theta_i)$, and thus vary in each row of $X$.
As a by-product of our techniques we obtain a proof of the corresponding result for matrices with iid entries in cases where $p/n$ goes to zero or infinity and $\alpha\in(0,2)$.
\end{abstract}

\keywords{Random Matrix Theory, heavy-tailed distribution, random matrix with dependent entries, largest singular value, sample covariance matrix,
largest eigenvalue, linear process, random coefficient model}

\subjclass[2010]{60B20, 62G32, 60G55, 62H25}

\maketitle

%%%%%%%%%%%%%%%%%%%%%%%%%%%%%%%%%%%%%%%%%%%%%%%%%%
\section{Introduction}

Recently there has been increasing interest in studying \emph{large dimensional data sets} that arise in finance, wireless communications, genetics and other fields. Patterns in these data can often be summarized by the \emph{sample covariance matrix}, as done in multivariate regression and dimension reduction via factor analysis. 
Therefore, our objective is to study the asymptotic behavior of the eigenvalues $\lambda_{(1)}\geq\ldots\geq\lambda_{(p)}$ of a $p\times p$ sample covariance matrix $XX^\T$, where the \emph{data matrix} $X$ is obtained from $n$ observations of a high-dimensional stochastic process with values in $\R^p$. Classical results in this direction often assume that the entries of $X$ are independent and identically distributed (iid) or satisfy some moment conditions. 
For example, the Four Moment Theorem of Tao and Vu \cite{Tao2012} shows that the asymptotic behaviour of the eigenvalues of $XX^\T$ is determined by the first four moments of the distribution of the iid matrix entries of $X$.
Our goal is to weaken the moment conditions by allowing for heavy-tails, and the assumption of independent entries by allowing for dependence within the rows and columns. Potential applications arise in portfolio management in finance, where observations typically have heavy-tails and dependence.

Assuming that the data comes from a multivariate normal distribution, one is able to compute the joint distribution of the eigenvalues $(\lambda_{(1)},\ldots,\lambda_{(p)})$, see \cite{James1964}. Under the additional assumption that the dimension $p$ is fixed while the sample size $n$ goes to infinity, Anderson \cite{Anderson1963} obtains a central limit like theorem for the largest eigenvalue. Clearly, it is not possible to derive the joint distribution in a general setting where the distribution of $X$ is not invariant with respect to orthogonal transformations. Furthermore, since in modern applications with large dimensional data sets, $p$ might be of similar or even larger order than $n$, it might be more suitable to assume that both $p$ and $n$ go to infinity, so Anderson's result may not be a good approximation in this setting. For example, considering a financial index like the S\&P 500, the number of stocks is $p=500$, whereas, if daily returns of the past 5 years are given, $n$ is only around 1300. In genetic studies, the number of investigated genes $p$ might easily exceed the number of participating individuals $n$ by several orders of magnitude. In this \emph{large $n$, large $p$} framework results differ dramatically from the corresponding \emph{fixed $p$, large $n$} results - with major consequences for the statistical analysis of large data sets \cite{Johnstone2001}.

Spectral properties of large dimensional random matrices is one of many topics that has become known under the banner \emph{Random Matrix Theory (RMT)}. 
The original motivation for RMT comes from mathematical physics \cite{Dyson1962}, \cite{Wigner1958}, where large random matrices serve as a finite-dimensional approximation of infinite-dimensional operators. Its importance for statistics comes from the fact that RMT may be used to correct traditional tests or estimators which fail in the `large $n$, large $p$' setting. For example, Bai et al. \cite{Bai2009} gives corrections on some likelihood ratio tests that fail even for moderate $p$ (around 20), and El Karoui \cite{ElKaroui2008} consistently estimates the spectrum of a large dimensional covariance matrix using RMT. Thus statistical considerations will be our motivation for a random matrix model with heavy-tailed and dependent entries.

Before describing our results, we will give a brief overview of some of the key results from RMT for real-valued sample covariance matrices $XX^\T$. A more detailed account on RMT can be found, for instance, in the textbooks \cite{Anderson2009}, \cite{Bai2010}, or \cite{Mehta2004}. Here $X$ is a real $p\times n$ random matrix, and $p$ and $n$ go to infinity simultaneously. Let us first assume that the entries of $X$ are iid with variance 1. Results on the \emph{global behavior of the eigenvalues} of $XX^\T$ mostly concern the \emph{spectral distribution}, that is the random probability measure of its eigenvalues $p^{-1} \sum_{i=1}^p \eps_{n^{-1}\lambda_{(i)}}$, where $\eps$ denotes the Dirac measure. The spectral distribution converges,  as $n,p\to\infty$ with $p/n\to\gamma\in(0,1]$, to a deterministic measure with density function
\[ \frac{1}{2\pi x\gamma}\sqrt{(x_{+}-x)(x-x_{-})} \1_{(x_{-},x_{+})}(x), \quad x_{\pm}\coloneqq (1\pm\sqrt{\gamma})^2, \]
where $\1$ denotes the indicator function. This is the so called \emph{Mar{\v{c}}enko--Pastur law} \cite{Marchenko1967}, \cite{Wachter1978}. 
One obtains a different result if $XX^\T$ is perturbed via an affine transformation \cite{Marchenko1967}, \cite{Pan2010}.
Based on these results, \cite{Pfaffel2011} treats the case where the rows of $X$ are given by independent copies of a linear process. Apart from a few special cases, the limiting spectral distribution is not known in a closed form if the entries of $X$ are not independent.

Although the eigenvalues of $XX^\T$ offer various interesting local properties to be studied, we will only focus on the joint asymptotic behavior of the $k$ largest eigenvalues $(\lambda_{(1)},\ldots,\lambda_{(k)})$, $k\in\N$. This is motivated from a statistical point of view since the variances of the first $k$ principal components are given by the $k$ largest eigenvalues of the covariance matrix. Geman \cite{Geman1980} shows, assuming that the entries of $X$ are iid and have finite fourth moments, that $n^{-1}\lambda_{(1)}$ converges to $x_{+}=(1+\sqrt{\gamma})^2$ almost surely if $p/n\to\gamma\in(0,\infty)$. Moreover, if the entries of $X$ are iid standard Gaussian, Johnstone \cite{Johnstone2001} shows that
\[ \frac{\sqrt{n}+\sqrt{p}}{\sqrt[3]{\frac{1}{\sqrt{n}}+\frac{1}{\sqrt{p}}}}\left(\frac{\lambda_{(1)}}{\left(\sqrt{n}+\sqrt{p}\right)^2}-1\right) \sto{D}{} \xi, \]
where $\xi$ follows the \emph{Tracy--Widom distribution} with $\beta=1$. Soshnikov \cite{Soshnikov2002} extends this to more general symmetric non-Gaussian distributions if the matrix $X$ is nearly square, and obtains a similar result for the joint convergence of the $k$ largest eigenvalues. The Tracy--Widom distribution first appeared as the limit of the largest eigenvalue of a Gaussian Wigner matrix \cite{Tracy1994}. Péché \cite{Peche2009} shows that the assumption of Gaussianity in Johnstone's result can be replaced by the assumption that the entries of $X$ have a symmetric distribution with sub-Gaussian tails, and she allows for $\gamma$ being zero or infinity.

There exist results on extreme eigenvalues of $XX^\T$ which include dependence within the rows or columns of $X$, but most of them are only valid if $X$ has complex-valued entries such that its real as well as its complex part have a non-zero variance.  A notable exception, where the real-valued case is considered, is \cite{Bloemendal2010}. They assume that the rows of $X$ are normally distributed with a covariance matrix which has exactly one eigenvalue not equal to one.

In contrast to the light tailed case described above, there exist only a handful of articles dealing with sample covariance matrices $XX^\T$ obtained from heavy-tailed observations. All these results only apply to matrices $X$ with iid entries. Belinschi et al. \cite{Belinschi2009} compute the limiting spectral distribution of sample covariance matrices based on observations with infinite variance. Regarding the $k$-largest eigenvalues, Soshnikov \cite{Soshnikov2006} gives the weak limit in case the underlying distribution of the matrix entries is Cauchy. Biroli et al. \cite{Biroli2007} argued, using heuristic arguments and numerical simulations, that Soshnikov's result extends to general distributions with regularly varying tails with index $0<\alpha<4$. A mathematically rigorous proof of this claim followed by Auffinger et al. \cite{Auffinger2009}.

%Assume that the entries of $X$ are iid and \emph{regularly varying} with \emph{tail index} $\alpha\in(0,4)$. This implies that the entries of $X$ have infinite fourth moments. For $0<\alpha<2$ and $p/n\to\gamma\in(0,\infty)$, the limiting spectral distribution of $XX^\T$ is computed by Belinschi et al. \cite{Belinschi2009}. In the same framework, Soshnikov \cite{Soshnikov2006} shows that the point process of normalized eigenvalues converges to a Poisson process. This implies a Fr\'echet limiting distribution for the normalized largest eigenvalue, which is in sharp contrast to the Tracy--Widom result in the light-tailed case. The last result also holds if $2\leq\alpha<4$, see Auffinger et al. \cite{Auffinger2009}.

We extend the previous results for $0<\alpha<4$ by allowing for dependent entries. More specifically, the rows of $X$ are given by independent copies of some linear process. Their respective coefficients can either all be equal (\cref{first result}) or, more generally, conditionally on a latent process, vary in each row (\cref{extension}). In the latter case the rows of $X$ are not necessarily independent. The limiting Poisson process of the eigenvalues of $XX^\T$ depends on the tail index $\alpha$ as well as the coefficients of the observed linear processes. As a by-product, we obtain an independent proof of Soshnikov's result for iid entries which also holds in cases where $\gamma\in\{0,\infty\}$.

The paper is organized as follows. The main results will be presented in Section 2 while the proofs will be given in Section 3. Results from the theory of point processes and regular variation are required through most of this paper.
A detailed account on both topics can be found in a number of texts. We mainly adopt the setting, including notation and terminology, of Resnick \cite{Resnick2008}.

%%%%%%%%%%%%%%%%%%%%%%%%%%%%%%%%%%%%%%%%%%%%%%%%%%
\section{Main results on heavy-tailed random matrices with dependent entries}

\subsection{A first result on the largest eigenvalue}\label{first result}

Let $(Z_{it})_{i,t}$ be an array of iid random variables with marginal distribution that is regularly varying with tail index $\alpha>0$ and \emph{normalizing sequence} $a_n$, i.e.,
\begin{align}
\lim_{n\to\infty}nP(|Z_{it}|>a_nx)=x^{-\alpha}, \quad\tn{for each } x>0.
\label{Z reg var}
\end{align}
%This implies that there exists a slowly varying function $l:\R_+\to\R_+$ such that
Equivalently, this means that $(|Z_{it}|)$ is in the maximum domain of attraction of a Fr\'echet distribution with parameter $\alpha>0$.
The sequence $a_n$ is then necessarily characterized by
\begin{align}
a_n=n^{1/\alpha} L(n), % \tn{ for each $n\in\N$}.
\label{a_n}
\end{align}
for some slowly varying function $L:\R_+\to\R_+$, i.e., a function with the property that, for each $x>0$, $\lim_{t\to\infty}{L(tx)/L(t)} = 1$.
In certain cases we also assume that $Z_{11}$ satisfies the \emph{tail balancing condition}, i.e., the existence of the limits
\begin{align}
\lim_{x\to\infty}\frac{P(Z_{11}>x)}{P(|Z_{11}|>x)}=q \quad\tn{and}\quad \lim_{x\to\infty}\frac{P(Z_{11}\leq -x)}{P(|Z_{11}|>x)}=1-q
\label{balancing}
\end{align}
for some $0\leq q\leq 1$. For each $p,n\in\N$, let $X=(X_{it})$
% \begin{align}
% X = \bpm X_{11} & \cdots & X_{1n} \\ \vdots &  & \vdots \\ X_{p1} & \cdots & X_{pn} \epm %\in\R^{p\times n}
% \label{matrix X}
% \end{align}
be the $p\times n$ data matrix, where, for each $i$,
\begin{align}
X_{it} = \sum_{j=-\infty}^\infty c_j Z_{i,t-j} %, \ t\in\{1,\ldots,n\}.
\label{def Xit}
\end{align}
is a stationary linear times series. To guarantee that the series in \eqref{def Xit} converges almost surely, we assume that
\begin{align}
\sum_{j=-\infty}^\infty |c_j|^\delta<\infty \quad\tn{ for some } \delta<\min\{\alpha,1\}.
\label{summability c_j}
\end{align}
Thus in our model the rows of $X$ are given by iid copies of a linear process. We denote by $\lambda_{1},\ldots,\lambda_{p}\geq 0$ the eigenvalues of the $p\times p$ sample covariance matrix $XX^\T$. They are studied via the induced point process
\begin{align} % unordered, and by $\lambda_{(1)}\geq\ldots\geq\lambda_{(p)}$ the ordered 
N_n =  \sum_{i=1}^p \eps_{a_{np}^{-2}\left(\lambda_{i}-n\mu_{X,\alpha}\right)},
\label{pp eigenvalues}
\end{align}
where
\begin{align}\label{mu X alpha}
\mu_{X,\alpha} = \left\{\begin{array}{ll}  0 & \tn{ for } 0<\alpha<2,\\
 E\left(Z_{11}^2\1_{\{Z_{11}^2\leq a_{np}^2\}}\right) \sum_j c_j^2 &  \tn{ for } \alpha=2 \tn{ and } EZ_{11}^2=\infty,\\
 E\left(Z_{11}^2\right) \sum_j c_j^2 & \tn{ else.} \end{array}\right.
\end{align}
Since we are only interested in the largest eigenvalues, we consider $N_n$ as a point process on $(0,\infty)$ and only count eigenvalues $\lambda_i$ which are positive. Observe that the centralization term $n\mu_{X,\alpha}$ is equal to the mean of the diagonal elements of $XX^\T$ %if $\alpha>2$, or $\alpha=2$ and $EZ_{11}^2<\infty$, i.e., 
if the observations have a finite variance. In case the observations have an infinite variance, we do not have to center, except when $\alpha=2$ and $EZ_{11}^2=\infty$, where we use a truncated version of the mean. In the latter case $\mu_{X,\alpha}$ also depends on $p$ and $n$.% $E(XX^\T)_{11}$.

 We will always assume that $p=p_n$ is an integer-valued sequence in $n$ that goes to infinity as $n\to\infty$ in order to obtain results in the `large $n$, large $p$' setting. In the following we suppress the dependence of $p$ on $n$ so as to simplify the notation wherever this does not cause any ambiguity. In \cite{Auffinger2009,Soshnikov2006} the iid case is considered, i.e., $X_{it}=Z_{it}$, %for $1\leq i\leq p,1\leq t\leq n$
assuming that the condition \eqref{Z reg var} holds for $0<\alpha<4$. They show, if $p,n\to\infty$ with
\begin{align}
\lim_{n\to\infty}\frac{p_n}{n}=\gamma\in(0,\infty),
\label{p/n to gamma}<
\end{align}
that
\begin{align}
\sum_{i=1}^p \eps_{a_{np}^{-2}\lambda_{i}} \ston{D} N,
\end{align}
%$N_n$ converges in distribution to a Poisson process $N$ 
where $N$ is a Poisson process with intensity measure $\hat{\nu}((x,\infty])=x^{-\alpha/2}$. Our next theorem extends this result by considering the case where $X$ has dependent entries. 
More precisely, the rows of $X$ are given by independent copies of a linear process. It will turn out that the intensity measure of the limiting Poisson process depends on the sum of the squared coefficients of the underlying linear process.
In contrast to \cite{Auffinger2009}, we necessarily have to center the eigenvalues $\lambda_i$ by $n\mu_{X,\alpha}$ when $\alpha\geq 2$, since in that case we consider a regime where $p\approx n^\beta$ with $\beta<1$ instead of \eqref{p/n to gamma}.
% where $p \ll n$ instead of $p\approx n$. 

\begin{theorem}\label{th1}
Define the matrix $X=(X_{it})$ as in equations \eqref{Z reg var}, \eqref{def Xit} and \eqref{summability c_j} %\labelcref{Z reg var,def Xit,summability c_j} 
with $\alpha\in(0,4)$. Suppose $p_n,n\to\infty$ such that
\begin{align}
\limsup_{n\to\infty} \frac{p_n}{n^\beta}<\infty
\label{beta con}
\end{align}
for some $\beta>0$ satisfying
\begin{enumerate}
\item $\beta<\infty$ if $0<\alpha\leq 1$, and % $\alpha\in(0,1]$
\item $\beta<\max\left\{\frac{2-\alpha}{\alpha-1},\frac12\right\}$ if $1<\alpha<2$.
\item $\beta<\max\left\{\frac{1}{3},\frac{4-\alpha}{4(\alpha-1)}\right\}\quad$ if $\quad 2\leq\alpha< 3$, or
\item $\beta<\frac{4-\alpha}{3\alpha-4}\quad$ if $\quad 3\leq\alpha<4$.
\end{enumerate} % of the eigenvalues of $a_{np}^{-2}(XX^\T-n\mu_{X,\alpha}I_{p_n})$
Further assume, in case $\alpha\in(5/3,4)$, that $Z_{11}$ has mean zero and satisfies the tail balancing condition \eqref{balancing}.
Then the point process $N_n$, as defined in \eqref{pp eigenvalues}, converges in distribution to a Poisson point process $N$ with intensity measure $\nu$ which is given by \[ \nu((x,\infty])=x^{-\alpha/2} \left|\sum_{j=-\infty}^\infty c_j^2 \right|^{\alpha/2}, \quad x>0. \]
\end{theorem}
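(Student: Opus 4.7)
The plan is to prove \cref{th1} by first reducing the spectral problem to one about the diagonal of $XX^\T$, then linearising the diagonal so that it matches, at leading order, the iid case studied in \cite{Auffinger2009,Soshnikov2006}, and finally invoking a point-process limit theorem for iid sums of squared regularly varying variables. Concretely, I would write $XX^\T = D_n + E_n$ with $D_n = \diag(XX^\T)$ and $E_n$ the off-diagonal part. By Weyl's inequality, the ordered eigenvalues of $XX^\T$ differ from the diagonal entries $(D_n)_{ii}=\sum_{t=1}^n X_{it}^2$ (after reordering) by at most the operator norm $\|E_n\|$. Hence, once one proves $\|E_n\|=o_P(a_{np}^2)$, it suffices to analyse the point process of centred and rescaled diagonal entries.

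For the linearisation, expand
\[ (D_n)_{ii} = \sum_{t=1}^n \Bigl(\sum_j c_j Z_{i,t-j}\Bigr)^2 = \Bigl(\sum_j c_j^2\Bigr)\sum_{t=1}^n Z_{it}^2 + R_i, \]
where $R_i$ collects the cross terms $c_j c_k Z_{i,t-j}Z_{i,t-k}$ with $j\neq k$ together with boundary corrections arising from the two-sided sum \eqref{def Xit}. Since the rows of $X$ are iid, the $R_i$ are iid across $i$. Using the summability \eqref{summability c_j} and the independence of the noise, one shows, via moment estimates for sums of independent heavy-tailed products (and the tail balancing assumption \eqref{balancing} when $\alpha\in(5/3,4)$ to neutralise first-order terms), that $\max_{i\le p}|R_i-ER_i|=o_P(a_{np}^2)$, while $ER_i$ is absorbed into the centering $n\mu_{X,\alpha}$ defined in \eqref{mu X alpha}. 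It then remains to establish
\[ \sum_{i=1}^p \eps_{a_{np}^{-2}((\sum_j c_j^2)\sum_{t=1}^n Z_{it}^2 - n\mu_{X,\alpha})} \ston{D} N, \]
which, because $Z_{it}^2$ is regularly varying with index $\alpha/2$, follows from classical point-process theory for iid heavy-tailed sums in Resnick's framework \cite{Resnick2008}, after a truncation at level $a_{np}$ to deal with the centering when $\alpha\ge 2$. The scalar factor $\sum_j c_j^2$ transforms the intensity $x^{-\alpha/2}$ of the iid case into $x^{-\alpha/2}(\sum_j c_j^2)^{\alpha/2}$, matching the claim.

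The main obstacle is the operator norm bound $\|E_n\|=o_P(a_{np}^2)$ together with the negligibility of $\max_i|R_i-ER_i|$; this is exactly where the piecewise constraints on $\beta$ in \textit{(i)--(iv)} enter. In the infinite-variance regime $\alpha<2$ the argument is relatively clean, because sums of iid heavy-tailed terms are dominated by their maxima and a union bound over the $p$ rows suffices. In the regime $\alpha\in[2,4)$, however, one needs Rosenthal/Burkholder-type moment inequalities for the centred cross products $\sum_{t=1}^n X_{it}X_{jt}$ appearing in $E_n$, and must trade off the truncation level against the number $n$ of summands and the dimension $p$; optimising this trade-off in each tail-index range yields exactly the bounds on $\beta$ stated in the theorem. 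The mean-zero and tail-balancing hypotheses when $\alpha\in(5/3,4)$ are required so that the dominant contributions of the cross products are genuinely centred, which is essential for these moment inequalities to deliver the needed savings of order $a_{np}^2$.
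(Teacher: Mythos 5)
Your proposal follows essentially the same route as the paper: reduction to the diagonal via Weyl's inequality and an operator-norm bound on the off-diagonal part (with truncation at level $a_{np}$ and second/third/fourth-moment estimates producing exactly the piecewise constraints on $\beta$), linearisation of the diagonal entries to $(\sum_j c_j^2)\sum_t Z_{it}^2$ plus negligible cross terms, and a large-deviation/point-process limit for iid sums of squared regularly varying variables. The sketch correctly identifies where the tail-balancing and mean-zero hypotheses enter, so it is a faithful outline of the paper's argument.
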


\cref{th1} weakens the assumption of independent entries made so far in the literature on heavy-tailed random matrices
at the expense of assumption \eqref{beta con}, which is more restrictive than the usual assumption \eqref{p/n to gamma} if $\alpha\in[1.5,4)$. However, if $\alpha\in(0,1.5)$, our assumption \eqref{beta con} is more general than \eqref{p/n to gamma}. This is important for statistical applications, because $p$ and $n$  are usually fixed and there is no functional relationship between the two of them. 

If we restrict ourselves to the iid case, then \cref{th1b} shows that the point process convergence result also holds in many cases where the limit $\gamma$ from condition \eqref{p/n to gamma} is zero or infinity, for example, by assuming that $p$ is regularly varying in $n$.

\begin{theorem}\label{th1b}
Assume that $X_{it}=Z_{it}$ and equation \eqref{Z reg var} is satisfied with $\alpha\in(0,2)$. Further, let either
\begin{enumerate}
 \item $p_n=n^\kappa l(n)$ for some $\kappa\in[0,\infty)$, where $l$ is a slowly varying function which converges to infinity if $\kappa=0$, and is bounded away from zero if $\kappa=1$, or
 \item $p_n\sim C\exp(cn^\kappa)$ for some $\kappa,c,C>0$.
\end{enumerate}
Then $N_n$ converges in distribution to a Poisson point process with intensity measure given by $\hat{\nu}((x,\infty])=x^{-\alpha/2}$.
\end{theorem}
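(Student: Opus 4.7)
The plan is to prove \cref{th1b} by comparing the eigenvalues of $XX^\T$ to the point process of squared entries $Z_{it}^2$, exploiting the fact that in the iid setting with $\alpha\in(0,2)$ no centering appears ($\mu_{X,\alpha}=0$, $\sum_j c_j^2=1$), so the target intensity $\hat\nu((x,\infty])=x^{-\alpha/2}$ coincides with the one in \cref{th1}.

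The first step is the point process convergence
\[ \wt N_n = \sum_{i=1}^{p_n}\sum_{t=1}^n \eps_{a_{np_n}^{-2} Z_{it}^2} \;\ston{D}\; N, \]
which holds because the $Z_{it}^2$ are iid, their tail $P(Z_{11}^2>x)=P(|Z_{11}|>\sqrt x)$ is regularly varying of index $\alpha/2$, and $a_{np_n}^2$ is the associated normalizing sequence for maxima of $np_n$ such variables \cite{Resnick2008}. This step only needs $np_n\to\infty$, which holds in both regimes (i) and (ii).

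The second and main step is a matrix perturbation argument. Fix $\eta>0$ and split $X=Y+Y'$ with $Y_{it}=Z_{it}\1_{\{|Z_{it}|\leq\eta a_{np_n}\}}$ and $Y'_{it}=Z_{it}\1_{\{|Z_{it}|>\eta a_{np_n}\}}$. Since the expected number of large entries tends to $\eta^{-\alpha}$, a birthday-type argument shows that with probability tending to one they occupy pairwise distinct rows and columns, so $Y'(Y')^\T$ is diagonal with nonzero eigenvalues equal to the large $Z_{it}^2$. The perturbation error is controlled by
\[ \norm{YY^\T}_\tn{op}+2\norm{YY'^\T}_\tn{op}\leq \tr(YY^\T)+2\norm{Y}_\tn{op}\norm{Y'}_\tn{op}. \]
Karamata's theorem (valid here since $\alpha<2$) gives $E[Z_{11}^2\1_{\{|Z_{11}|\leq c\}}]\sim\frac{\alpha}{2-\alpha}c^2 P(|Z_{11}|>c)$, whence $E[\tr(YY^\T)]=O(\eta^{2-\alpha}a_{np_n}^2)$ and $\norm{Y}_\tn{op}=O_\mb{P}(\eta^{1-\alpha/2}a_{np_n})$. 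The diagonal structure of $Y'(Y')^\T$ yields $\norm{Y'}_\tn{op}=O_\mb{P}(a_{np_n})$. Hence the total perturbation error is $O_\mb{P}(\eta^{2-\alpha}+\eta^{1-\alpha/2})\,a_{np_n}^2$, which is $o_\mb{P}(a_{np_n}^2)$ after letting $n\to\infty$ and then $\eta\to 0$, since $\alpha<2$. Combined with the first step, a standard continuous mapping argument on the point process space delivers $N_n\ston{D}N$.

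The main obstacle is making the above estimates uniform across the broad range of growth rates for $p_n$. Fortunately, every estimate depends on the dimension only through $np_n$ and the normalizing constant $a_{np_n}$, and both Karamata's asymptotics and the Poisson approximation for $\wt N_n$ require only $np_n\to\infty$. Neither the exponential regime (ii) nor the slowly varying regime (i) at the boundary values $\kappa\in\{0,1\}$ introduces new technical issues beyond those already handled in the iid specialization of \cref{th1}; the polynomial restriction \eqref{beta con} present there is needed to manage dependence among rows and the centering issues that arise when $\alpha\geq 2$ or the $c_j$ are non-trivial, both of which are absent in the present iid, $\alpha<2$ setting.
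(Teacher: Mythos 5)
Your proposal is correct, but it takes a genuinely different route from the paper's. You use the entrywise truncation scheme of Soshnikov and Auffinger--Ben~Arous--P\'ech\'e: split $X=Y+Y'$ at level $\eta a_{np}$, note that the expected number of colliding pairs of exceedances is of order $\eta^{-2\alpha}(p^{-1}+n^{-1})\to0$ so that $Y'(Y')^\T$ is with high probability diagonal with the large $Z_{it}^2$ as its nonzero eigenvalues, and bound the remainder by $\tr(YY^\T)+2\norm{Y}_2\norm{Y'}_2=O_P\bigl((\eta^{2-\alpha}+\eta^{1-\alpha/2})a_{np}^2\bigr)$ via Karamata; since every estimate depends on the dimensions only through $np$ (plus $p,n\to\infty$ separately for the birthday bound), the regimes (i) and (ii) are indeed handled uniformly and no analogue of \eqref{beta con} is needed. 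The paper proceeds quite differently: it approximates $\lambda_{(k)}$ by the ordered diagonal entries $S_{(k)}$ of $XX^\T$ (row sums of squares), proving $\lambda_{(k)}/S_{(k)}\to1$ by induction on $k$ via the Cauchy interlacing theorem and the minimax principle, with the joint large-deviation result of \cref{large deviation} supplying $S_{(k)}/M_{(k)}\to1$ and $\norm{X}_\infty^2/M_{(1)}\to1$; because the auxiliary convergence \eqref{pp conv max and sum abs} requires $\liminf p/n>0$, the paper must first treat $\kappa\geq1$ and then transpose $X$ to cover $\kappa<1$ and case (ii). Your argument buys symmetry in $p$ and $n$ (no case distinctions, no transposition trick, no large-deviation machinery), while the paper's diagonal-approximation route is the one that survives the passage to dependent entries in \cref{th1}, where a single large $Z_{it}$ spreads across many matrix entries and entrywise truncation no longer isolates clean rank-one contributions. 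The only step you leave compressed is the final passage from the norm bound to convergence of $N_n$: one needs the double-limit comparison of Laplace functionals exactly as in the paper's proof of \cref{th1} (Weyl's inequality, uniform continuity of $f$, and \cite[Theorem 3.2]{Billingsley1999}), but this is routine.
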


It is well known \cite{Resnick2008} that a Poisson process has an explicit representation as a transformation of a homogeneous Poisson process. In our case, the limiting Poisson process $N$ with intensity measure $\nu$ from \cref{th1} can be written as
\begin{align}
N\eqd\sum_{i=1}^\infty \eps_{\Gamma_i^{-2/\alpha} \sum_{j=-\infty}^\infty c_j^2},
\label{N via Gamma}
\end{align}
where $\Gamma_i=\sum_{k=1}^{i}E_k$ is the successive sum of iid exponential random variables $E_k$ with mean one. The points of $N$ are labeled in decreasing order so that, by the continuous mapping theorem, we can easily deduce the weak limit of the (centered) $k$ largest eigenvalues of $XX^\T$.

\begin{corollary}
Denote by $\lambda_{(1)}\geq\ldots\geq\lambda_{(p)}$ the upper order statistics of the eigenvalues of $XX^\T-n\mu_{X,\alpha}I_p$. Under the assumptions of \cref{th1} we have, for each fixed integer $k\geq 1$, that the $k$-largest eigenvalues jointly converge,
\begin{align*}
a_{np}^{-2} \left(\lambda_{(1)},\ldots,\lambda_{(k)}\right) \ston{D} \left(\Gamma_1^{-2/\alpha},\ldots,\Gamma_k^{-2/\alpha}\right) \left( \sum_{j=-\infty}^\infty c_j^2\right).
\end{align*}
In particular, for each $x>0$, 
\[ P\left( \frac{\lambda_{(k)}}{a_{np}^2} \leq x \right) \ston{} P(N(x,\infty)\leq k-1)= e^{-x^{-\alpha/2}} \sum_{m=0}^{k-1}{ \frac{x^{-m\alpha/2}}{m!} \left(\sum_{j}{c_j^2}\right)^{m\alpha/2}}. \]
This implies for the largest eigenvalue $\lambda_{(1)}$ of $XX^\T-n\mu_{X,\alpha}I_p$ that
\begin{align*}
\frac{\lambda_{(1)}}{a_{np}^2 \sum_j c_j^2} \ston{D} V,
\end{align*}
where $V$ has a Fr\'echet distribution with parameter $\alpha/2$, i.e., $P(V\leq x)=e^{-x^{-\alpha/2}}$.\\
\label{corollary after th1}\end{corollary}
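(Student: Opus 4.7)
The plan is to deduce all three assertions directly from the point process convergence $N_n \ston{D} N$ established in \cref{th1}, together with the explicit Poisson representation \eqref{N via Gamma}. The guiding principle is that the functional sending a Radon point measure on $(0,\infty)$ to the ordered vector of the locations of its $k$ largest atoms is almost surely continuous at the limit $N$, so a continuous mapping argument transports point-process convergence into joint convergence of the $k$ largest points.

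To make this precise, first observe that the intensity $\nu((x,\infty]) = x^{-\alpha/2}\bigl(\sum_j c_j^2\bigr)^{\alpha/2}$ is finite for every $x>0$ and diverges as $x\downarrow 0$, so $N$ is almost surely a simple point measure whose atoms accumulate only at the origin; in particular the $k$ largest are a.s.\ strictly ordered and finite. A standard argument in point process theory (see Resnick \cite{Resnick2008}) then gives that, on the full-probability event on which these properties hold, the $k$ largest atoms are continuous functionals of a vaguely convergent sequence of point measures. Applied to $N_n$, whose atoms are $a_{np}^{-2}(\lambda_i-n\mu_{X,\alpha})$, and using \eqref{N via Gamma} to identify the $i$-th largest atom of $N$ as $\Gamma_i^{-2/\alpha}\sum_j c_j^2$, this gives
\[
a_{np}^{-2}(\lambda_{(1)},\ldots,\lambda_{(k)}) \ston{D} \bigl(\Gamma_1^{-2/\alpha},\ldots,\Gamma_k^{-2/\alpha}\bigr)\sum_j c_j^2,
\]
which is the first display of the corollary.

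For the second display I would rewrite $\{\lambda_{(k)}/a_{np}^2 \leq x\} = \{N_n((x,\infty)) \leq k-1\}$ and apply the portmanteau theorem to the $\nu$-continuity set $(x,\infty)$; since $N(x,\infty)$ is Poisson with mean $\nu((x,\infty])$, evaluating the Poisson pmf at $0,\ldots,k-1$ yields the advertised series. The third display is the case $k=1$: since $\Gamma_1=E_1$ is exponential with mean one, $P(\Gamma_1^{-2/\alpha}\leq x) = P(E_1\geq x^{-\alpha/2}) = e^{-x^{-\alpha/2}}$, so rescaling by $\sum_j c_j^2$ produces the Fr\'echet limit $V$ with parameter $\alpha/2$. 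There is no real obstacle here: the corollary is a clean consequence of \cref{th1}, and the only point that requires any care is the a.s.\ continuity of the order-statistic functional at $N$, which is classical.
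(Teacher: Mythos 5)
Your proposal is correct and follows essentially the same route as the paper, which deduces the corollary from the point process convergence of \cref{th1} via the continuous mapping theorem applied to the (a.s.\ continuous at $N$) functional extracting the $k$ largest atoms, together with the representation \eqref{N via Gamma}. The paper gives no further detail beyond this one-line remark, so your fleshed-out version of the continuity argument and the Poisson/Fr\'echet computations matches its intent exactly.
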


%In particular, $\lambda_{(1)}$ is the largest eigenvalue of $XX^\T$ in case $\alpha<2$, and the largest eigenvalue of $XX^\T-n\mu_{X,\alpha}I_p$ for $\alpha\geq 2$ in magnitude. Thus $\lambda_{(1)}$ is the spectral norm of $XX^\T-n\mu_{X,\alpha}I_p$. The corollary shows that

%Equivalently, in terms of the singular values $s_{(1)}=\sqrt{\lambda_{(1)}}\geq\ldots\geq s_{(p)}=\sqrt{\lambda_{(p)}}$ of the matrix $X$, we obtain, for any fixed positive integer $k$, that
%\begin{align*}
% a_{np}^{-1} \left(s_{(1)},\ldots,s_{(k)}\right) \ston{D} \left(\Gamma_1^{-1/\alpha},\ldots,\Gamma_k^{-1/\alpha}\right) \sqrt{\sum_{j=-\infty}^\infty c_j^2}.
%\end{align*}

In a nutshell, the results in this section give the asymptotic behavior of the $k$ largest eigenvalues of a sample covariance matrix $XX^\T$ when the rows of $X$ are given by iid copies of some linear process with infinite variance. 
Our results will be generalized further in \cref{extension}, where, conditionally on a latent process, the rows of $X$ will be independent but not identically distributed.

\subsection{Examples and discussion}

\cref{th1} holds for any linear process which has regularly varying noise with infinite variance as long as condition \eqref{summability c_j} is satisfied. 
Since the coefficients of a causal ARMA process decay exponentially, \eqref{summability c_j} is trivially satisfied in this case.
As two simple examples, consider an MA(1) process $X_{it}=Z_{it}+\theta Z_{i,t-1}$, which satisfies $\sum_j c_j^2=1+\theta^2$; and a causal AR(1) process $X_{it}-\phi X_{i,t-1}=Z_{it}$, $|\phi|<1$, where $\sum_j c_j^2=(1-\phi^2)^{-1}$. Yet another example of a linear process fitting in our framework is a fractionally integrated ARMA($p,d,q$) processes with $d<0$ and regularly varying noise with index $\alpha\in[1,4)$, see, e.g., \cite{Brockwell1991} for further details. In this case $|c_j|\leq C j^{d-1}$ is summable and therefore condition \eqref{summability c_j} is satisfied for $\alpha\geq 1$. 

Regarding the normalization in \eqref{pp eigenvalues}, the sequence $a_n$ is chosen such that the individual entries of the matrix $Z\coloneqq(Z_{it})_{i,t}$ satisfy \eqref{Z reg var}. Replacing the iid sequence in the rows of $Z$ with a linear process to obtain the matrix $X$ changes the tail behavior of its entries. Indeed, the result stated in Davis and Resnick \cite[eq. (2.7)]{Davis1985} shows,  under the assumption \eqref{balancing} and $EZ_{11}=0$ if $\alpha>1$, that
\begin{align*}
nP\left( \Big|\sum_j c_j Z_{1,t-j}\Big|>a_{np}^2 x \right) \ston{} x^{-\alpha} \sum_j |c_j|^\alpha.
\end{align*} 
In view of \eqref{Z reg var} this suggests the normalization $\tilde{X}_{it}={\left(\sum_j |c_j|^\alpha\right)^{-1/\alpha}}X_{it}$.
Denote by $\tilde{\lambda}_{1},\ldots,\tilde{\lambda}_p$ the eigenvalues of $\tilde{X}\tilde{X}^\T$, where $\tilde{X}=(\tilde{X}_{it})_{i,t}$, and let $\mu_{\tilde X,\alpha}=E\tilde X_{11}^2={\left(\sum_j |c_j|^\alpha\right)^{-2/\alpha}}\mu_{X,\alpha}$. Since this is just a multiplication by a constant, we immediately obtain, by \cref{th1} (i), that
\[ \sum_{i=1}^p \eps_{a_{np}^{-2}({\lambda}_{i}-n\mu_{\tilde X,\alpha})}\ston{D} \tilde{N} , \]
where $\tilde{N}$ is a Poisson process with intensity measure $\tilde{\nu}$ given by
\begin{align}
 \tilde{\nu}((x,\infty])=x^{-\alpha/2}\frac{\left|\sum_{j} c_j^2 \right|^{\alpha/2}}{\sum_j|c_j|^\alpha} .
\end{align}
Thus ${\left|\sum_{j} c_j^2 \right|^{\alpha/2}}({\sum_j|c_j|^\alpha})^{-1}$ quantifies the effect of the dependence on the point process of the eigenvalues when the tail behavior of each marginal $X_{it}$ is equivalent to the iid case.

Assume for a moment that the dimension $p$ is fixed for any $n$, and that $0<\alpha<2$. Then it follows easily from \cite[Theorem 4.1]{Davis1985} and arguments of our paper that $a_n^{-2}\lambda_{(1)}\to \sum_j c_j^2 \max_{1\leq i\leq p} S_i$ in distribution as $n\to\infty$, where $(S_i)$ are independent positive stable with index $\alpha/2$. If $p$ is large, one would intuitively expect that $\max_{1\leq i\leq p} S_i\approx p^{2/\alpha} \Gamma_1^{-2/\alpha}$, where $\Gamma_1$ is exponentially distributed with mean 1. \cref{corollary after th1} not only makes this intuition precise but also gives the correct normalization $a_{np}^{-2}$.
The distribution of the maximum of $p$ independent stables is not known analytically, hence `large n, large p' in fact gives a simpler solution than the traditional `fixed p, large n' setting.

\subsection{Extension to random coefficient models}\label{extension} % with dependence between the rows

So far we have assumed that our observed process has independent components, each of which are modelled by the same linear process. From now on we will allow for a different set of coefficients in each row. To this end, let $(\theta_i)_{i\in\N}$ be a sequence of random variables \emph{independent of $(Z_{it})$} with values in some space $\Theta$. Assume that there is a family of measurable functions $(c_j:\Theta\to\R)_{j\in\N}$ such that
\begin{align}
\sup_{\theta\in\Theta}|c_j(\theta)|\leq \wt{c}_j, \quad \tn{for some deterministic $\wt{c}_j$ satisfying condition \eqref{summability c_j}.}
\label{c_j bounded} 
\end{align}
Our observed processes have the form
\begin{align}
X_{it} = \sum_{j=-\infty}^\infty c_j(\theta_i) Z_{i,t-j}
\label{def Xit gen}
\end{align}
where $(Z_{it})$ is given as in \eqref{Z reg var} with $\alpha\in(0,4)$. Thus, conditionally on the latent process $(\theta_i)$, the rows of $X$ are independent linear processes with different coefficients. Unconditionally, the rows of $X$ are dependent if the sequence $(\theta_i)$ is dependent. \cref{th2} below covers three classes among which $(\theta_i)$ may be chosen: stationary ergodic; stationary but not necessarily ergodic; and ergodic in the Markov chain sense but not necessarily stationary. 
In the following we say that a sequence of point processes $\mathscr{M}_n$ converges, conditionally on a sigma-algebra $\mathcal{H}$, in distribution to a point process $\mathscr{M}$, if the conditional Laplace functionals converge almost surely, i.e., if there exists a measurable set $B$ with $P(B)=1$ such that for all $\omega\in B$ and all nonnegative continuous functions $f$ with compact support,
\begin{align}
 E\left(e^{-\mathcal{M}_n (f)}\big|\mathcal{H}\right)(\omega)\ston{} E\left(e^{-\mathcal{M} (f)}\big|\mathcal{H}\right)(\omega) \quad\textnormal{as }n\to\infty.
\label{cond pp convergence}\end{align}

\begin{theorem}\label{th2}
Define $X=(X_{it})$ with $X_{it}$ as given in \eqref{def Xit gen}. Suppose that \eqref{c_j bounded} is satisfied, and $p,n\to\infty$ such that \eqref{beta con}  holds under the same conditions as in \cref{th1} (i). Further assume, in case $\alpha\in(5/3,4)$, that $Z_{11}$ has mean zero and satisfies the tail balancing condition \eqref{balancing}.
\begin{enumerate}
 \item If $(\theta_i)$ is a stationary ergodic sequence, then, both conditionally on $(\theta_i)$ as well as unconditionally, we have that
 \begin{align}
\sum_{i=1}^p  \eps_{a_{np}^{-2}(\lambda_i-n\mu_{X,\alpha})} \ston{D} \sum_{i=1}^\infty \eps_{\Gamma_i^{-2/\alpha} \norm{\sum_{j} c_j^2(\theta_1)}_{\frac{\alpha}{2}} },
\label{mainClaimTh2}\end{align}
with the constant $\norm{\sum_{j} c_j^2(\theta_1)}_{{\frac{\alpha}{2}}}=\left(E\left|\sum_{j} c_j^2(\theta_1)\right|^{\alpha/2}\right)^{2/\alpha}$, and $(\Gamma_i)$ as in \eqref{N via Gamma}.
\item If $(\theta_i)$ is stationary but not necessarily ergodic, then we have, conditionally on $(\theta_i)$, that
\[ \sum_{i=1}^p  \eps_{a_{np}^{-2}(\lambda_i-n\mu_{X,\alpha})} \ston{D} \sum_{i=1}^\infty \eps_{\Gamma_i^{-2/\alpha} Y^{2/\alpha}}, \]
with $Y=E\left(|\sum_j{c_j^2(\theta_1)}|^{\alpha/2}|\mathcal{G}\right)$, where $\mathcal{G}$ is the invariant $\sigma$-field generated by $(\theta_i)$. In particular, $Y$ is independent of $(\Gamma_i)$.
\item Suppose $(\theta_i)$ is either an irreducible Markov chain on a countable state space $\Theta$ or a positive Harris chain in the sense of Meyn and Tweedie \cite{Meyn2009}. If $(\theta_i)$ has a stationary probability distribution $\pi$ then, conditionally on $(\theta_i)$ as well as unconditionally, \eqref{mainClaimTh2} holds with
\[ \norm{\sum_{j} c_j^2(\theta_1)}_{{\frac{\alpha}{2}}}=\left(\int_\Theta{\Big|\sum_j c_j^2(\theta)\Big|^{\alpha/2} \pi(d\theta)}\right)^{2/\alpha}.\]
\end{enumerate}
\end{theorem}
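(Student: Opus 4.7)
The plan is to condition on the latent sequence $(\theta_i)$, so that the rows of $X$ become independent (but row-dependent) linear processes, and then to apply a row-by-row version of the \cref{th1} machinery followed by an ergodic averaging. Throughout, the uniform domination $|c_j(\theta_i)|\leq\tilde c_j$ from \eqref{c_j bounded} plays the role of \eqref{summability c_j}, ensuring that every truncation, centring and tail estimate from the proof of \cref{th1} applies uniformly in $\theta$.

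The first step is to extract from the proof of \cref{th1} a row-wise approximation: conditionally on $(\theta_i)$, the eigenvalue point process $\sum_{i=1}^p \eps_{a_{np}^{-2}(\lambda_i - n\mu_{X,\alpha})}$ has the same weak limit on $(0,\infty)$ as the multiplicative noise point process
\[
\sum_{i=1}^p \sum_{t=1}^n \eps_{a_{np}^{-2}\, Z_{it}^2\, C_i},
\qquad C_i := \sum_{j=-\infty}^\infty c_j^2(\theta_i).
\]
This rests on the two pillars of the \cref{th1} argument: the top eigenvalues of $XX^\T$ are asymptotically equivalent to the centred diagonal entries $\sum_t X_{it}^2 - n\mu_{X,\alpha}$, and within each row the diagonal entry is dominated by $C_i\cdot\max_t Z_{it}^2$. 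The required estimates depend on the coefficients only through the summable dominating sequence $\tilde c_j$, so they go through conditionally on $(\theta_i)$ almost surely.

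Second, compute the conditional Laplace functional of the multiplicative process. Conditionally on $(\theta_i)$, the $C_i$ are constants and the $Z_{it}$ remain iid, so the functional factorises over rows. Regular-variation asymptotics give, for any nonnegative continuous $f$ of compact support in $(0,\infty)$,
\[
p\cdot g_n(c) \ston{} c^{\alpha/2}\!\int_0^\infty\!(1-e^{-f(y)})\tfrac{\alpha}{2}y^{-\alpha/2-1}\,dy, \qquad
g_n(c) := -\log E\exp\!\Big(-\sum_{t=1}^n f(a_{np}^{-2}Z_{1t}^2 c)\Big),
\]
uniformly on compact sets of $c$. For part~(i), Birkhoff's ergodic theorem applied to the bounded function $c\mapsto c^{\alpha/2}\wedge M$ (combined with a tail cutoff using $E\,C_1^{\alpha/2}\leq(\sum_j\tilde c_j^2)^{\alpha/2}<\infty$) yields
\[
\sum_{i=1}^p g_n(C_i)\ston{} E\,C_1^{\alpha/2}\cdot\int_0^\infty(1-e^{-f(y)})\tfrac{\alpha}{2}y^{-\alpha/2-1}\,dy \quad\text{a.s.},
\]
identifying the conditional limit as a Poisson process with intensity $x^{-\alpha/2}\,E\,C_1^{\alpha/2} = x^{-\alpha/2}\|C_1\|_{\alpha/2}^{\alpha/2}$, which is exactly \eqref{mainClaimTh2}. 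Part~(ii) replaces Birkhoff with its non-ergodic version, so $\frac{1}{p}\sum h(C_i)\to E(h(C_1)|\mathcal{G})$ produces the claimed mixed Poisson conditional limit. Part~(iii) invokes the ergodic theorem for positive Harris (or irreducible) chains with stationary distribution $\pi$, which holds for arbitrary initial distributions and produces the integral-against-$\pi$ form. The unconditional statements in (i) and (iii) then follow by bounded convergence.

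The main obstacle is Step~one: certifying that the row-wise approximation between the eigenvalue process and the multiplicative-noise process holds pointwise in $\omega$ with respect to $(\theta_i)$, with an exceptional null set independent of the test function. Each estimate from the \cref{th1} proof---the truncation arguments controlling contributions from noise variables below the normalizing scale, the centring corrections in the $\alpha\geq 2$ regimes, and the Bonferroni-type bounds for off-diagonal cross terms---must be verified to depend on the coefficients only through $\sum_j\tilde c_j^\delta$, $\sum_j\tilde c_j^2$ and $\sum_j\tilde c_j^\alpha$, all finite uniformly in $\theta$. Conditional Laplace convergence in the sense of \eqref{cond pp convergence} is then obtained by restricting to a countable determining family of test functions, so that a single a.s.\ convergence set handles all $f$ simultaneously.
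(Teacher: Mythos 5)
Your proposal is correct and follows essentially the same route as the paper: condition on $(\theta_i)$, reduce the eigenvalues to the (centred) diagonal entries via operator-norm control whose estimates depend on the coefficients only through the dominating sequence $\wt c_j$, factor the conditional Laplace functional over rows, and identify the limit by applying Birkhoff's ergodic theorem (respectively its non-ergodic version for (ii) and the Harris-chain ergodic theorem for (iii)) to $C_i^{\alpha/2}$, restricting to a countable determining family of test functions to get a single a.s.\ exceptional set. The only cosmetic difference is that you work with the exact per-row log-Laplace transform $g_n(C_i)$ of the individual products $Z_{it}^2C_i$, whereas the paper expands the product $\prod_i(1-B_{i,p}/p)$ built from the law of the row sums and therefore needs the extra estimate $p^{-2}\sum_i B_{i,p}^2\to 0$; both yield the same intensity $x^{-\alpha/2}E|C_1|^{\alpha/2}$.
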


One can view the assumptions (i) and (ii) of \cref{th2} in a Bayesian framework in which the parameters of the observed process are drawn from an unknown prior distribution. As an example, let $(\theta_i)$ be a stationary ergodic AR(1) process  $\theta_i=\phi\theta_{i-1}+\xi_i$, where $|\phi|\neq 1$ and $(\xi_i)$ is a sequence of bounded iid random variables, and set $X_{it}=Z_{it}+\theta_i Z_{i,t-1}$. Then, by \cref{th2} (i), we would expect, for $n$ and $p$ large enough, that 
\[ a_{np}^{-2}(\lambda_{(1)}-n\mu_{X,\alpha}) \approx \Gamma_1^{-\alpha/2} \left(E\left| 1+ \theta_1 \right|^{\alpha/2}\right)^{2/\alpha}. \]
Models of this kind are referred to as \emph{random coefficient models} and often used in times series analysis, see, e.g., \cite{Longford1993} for an overview.
In the setting of \cref{th2} (iii) one might think of a \emph{Hidden Markov Model} where the latent Markov process $(\theta_i)$ evolves along the rows of $X$,
each state $\theta_i$ defining another univariate linear model.

%%%%%%%%%%%%%%%%%%%%%%%%%%%%%%%%%%%%%%%%%%%%%%%%%%
\section{Proofs and auxiliary results}

The first step is to show that the matrix $XX^\T$ is well approximated by its diagonal, see \cref{opnorm}. In the second step we then derive the extremes of the diagonal of $XX^\T$ in \cref{extremes diagonal}. Both steps together yield the proofs of \cref{th1} and \cref{th1b} in \cref{proof th1}. The proof of \cref{th2} follows then by an extension of the previous methods in \cref{proof extension}. In the following we make frequent use of a large deviation result which is presented in the upcoming section.

\subsection{A large deviation result and its consequences}\label{sec large dev}

The next theorem gives the joint large deviations of the sum and the maximum of iid nonnegative random variables with infinite variance. It suffices to deal with the case where $0<\alpha<2$ since later on we mostly consider squared random variables that have tail index $\alpha/2$ with $0<\alpha/2<2$.

\begin{proposition}\label{large deviation}
Let $(x_n)_{n\in\N}$ and $(y_n)_{n\in\N}$ be sequences of nonnegative numbers with $x_n\to\infty$ such that $x_n/y_n\to\gamma\in(0,\infty]$. Suppose $(Y_t)_{t\in\N}$ is an iid sequence of nonnegative random variables with tail index $\alpha\in(0,2)$ and normalizing sequence $b_n$.
%  If $1\leq\alpha<2$ and $Y_1$ has finite mean, then we assume that $b_n x_n/n\to\infty$, and in the special case where $\alpha=1$ and $EY_1=\infty$, we assume that $b_n x_n/n^{1+\gamma}\to\infty$ for some $\gamma>0$.
If $1\leq\alpha<2$, we assume that $b_n x_n/n^{1+\delta}\to\infty$ for some $\delta>0$. Then
\begin{align}
\lim_{n\to\infty}\frac{P\left(\sum_{t=1}^n Y_t > b_n x_n,\max_{1\leq t\leq n} Y_t > b_n y_n\right)}{nP(Y_1>b_n\max\{x_n,y_n\})}=1.
\label{large dev result}
\end{align}
\end{proposition}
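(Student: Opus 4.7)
The proof proceeds by a classical one-big-jump decomposition. Write $z_n := \max\{x_n, y_n\}$, and note that because the $Y_t$ are nonnegative and $z_n \geq x_n$, the event $\{\max_t Y_t > b_n z_n\}$ already forces both $\max_t Y_t > b_n y_n$ and $\sum_{t=1}^n Y_t \geq \max_t Y_t > b_n z_n \geq b_n x_n$. This gives the sandwich
\[
P\left(\max_t Y_t > b_n z_n\right) \leq P\left(\sum_{t=1}^n Y_t > b_n x_n,\,\max_t Y_t > b_n y_n\right) \leq P\left(\max_t Y_t > b_n z_n\right) + R_n,
\]
where $R_n := P(\sum_t Y_t > b_n x_n,\, b_n y_n < \max_t Y_t \leq b_n z_n)$. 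The goal is to show that, after normalizing by $n P(Y_1 > b_n z_n)$, the two outer terms each tend to $1$.

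For the main term I would apply Bonferroni. Since $z_n \geq x_n \to \infty$, regular variation (and Potter's bounds if $z_n$ is unbounded) gives $nP(Y_1 > b_n z_n) \to 0$, and
\[
n P(Y_1 > b_n z_n)\bigl(1 - \tfrac{1}{2}(n-1)P(Y_1 > b_n z_n)\bigr) \leq P\left(\max_t Y_t > b_n z_n\right) \leq n P(Y_1 > b_n z_n),
\]
so that $P(\max_t Y_t > b_n z_n) \sim n P(Y_1 > b_n z_n)$. This already establishes the left half of the sandwich.

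For the remainder $R_n$ I split on $\gamma$. If $\gamma \leq 1$ then $y_n \geq x_n$ eventually, $z_n = y_n$, the interval $(b_n y_n, b_n z_n]$ is empty, and $R_n = 0$. If $\gamma > 1$ (including $\gamma = \infty$) then $z_n = x_n$ eventually, and nonnegativity yields
\[
R_n \leq P\left(\sum_{t=1}^n Y_t > b_n x_n,\, \max_t Y_t \leq b_n x_n\right) = P\left(\sum_{t=1}^n Y_t > b_n x_n\right) - P\left(\max_t Y_t > b_n x_n\right),
\]
since $\{\max_t Y_t > b_n x_n\} \subseteq \{\sum_t Y_t > b_n x_n\}$. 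I then invoke a Nagaev-type large deviation principle, $P(\sum_{t=1}^n Y_t > b_n x_n) \sim n P(Y_1 > b_n x_n)$: this holds unconditionally for $\alpha \in (0,1)$, and for $\alpha \in [1,2)$ under exactly the growth hypothesis $b_n x_n / n^{1+\delta} \to \infty$, which places $b_n x_n$ outside the Gaussian window. Combined with $P(\max_t Y_t > b_n x_n) \sim n P(Y_1 > b_n x_n)$ from the previous paragraph, this gives $R_n = o(n P(Y_1 > b_n x_n)) = o(n P(Y_1 > b_n z_n))$, as desired.

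Dividing the sandwich by $n P(Y_1 > b_n z_n)$ and passing to the limit yields the claim. The main obstacle is the Nagaev principle for $\alpha \in [1,2)$, which is not elementary; I would either quote it from the standard literature (Nagaev, or the Mikosch--Nagaev survey) or reprove it by truncating each $Y_t$ at level $\varepsilon b_n x_n$ and combining a Fuk--Nagaev moment bound on the truncated sum with a union bound on the event that at least one summand exceeds the truncation level. The hypothesis $b_n x_n / n^{1+\delta} \to \infty$ is precisely what makes the truncated-sum variance term negligible relative to $nP(Y_1 > b_n x_n)$.
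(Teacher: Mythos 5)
Your argument is correct, and it reaches the conclusion by a genuinely different route from the paper's. You sandwich the joint probability between $P(\max_t Y_t > b_n z_n)$ and $P(\max_t Y_t > b_n z_n)+R_n$, and you reduce the control of $R_n$ to the one-dimensional Nagaev asymptotic $P(\sum_{t=1}^n Y_t > b_n x_n)\sim nP(Y_1>b_nx_n)$, which you then quote from the literature (or propose to reprove). The paper instead keeps everything self-contained: it splits off the maximum at level $b_n\max\{\theta x_n,y_n\}$ with $\theta\uparrow 1$, bounds the sum of the remaining order statistics $\sum_{t\geq 2}Y_{(t)}$ by truncating each summand at $\epsilon b_n x_n$ (plus a separate bound on $P(Y_{(2)}>\epsilon b_nx_n)$), and controls the truncated sum via Markov's inequality and Karamata's theorem for $\alpha<1$, and via a mean correction plus a Chebyshev bound for $1\leq\alpha<2$ --- which is exactly where $b_nx_n/n^{1+\delta}\to\infty$ enters, as you correctly identify. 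So the technical core is the same one-big-jump mechanism; your version buys a cleaner top-level structure at the price of importing the upper half of Nagaev's theorem (the lower half being the trivial max bound you already have), while the paper proves only the one-sided estimate it needs and never invokes the two-sided sum asymptotic. One small repair: for $\gamma=1$ it is not true that $y_n\geq x_n$ eventually, so the case split on $\gamma$ should instead be done index by index --- for each $n$ either $z_n=y_n$ and $R_n=0$, or $z_n=x_n$ and your difference bound $R_n\leq P(\sum_t Y_t>b_nx_n)-P(\max_t Y_t>b_nx_n)$ applies --- after which nothing breaks, since the Nagaev and Bonferroni asymptotics hold along the full sequence.
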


% Note that the conditions on $b_n x_n$ ensure that the mean of the sum has no contribution to its large deviations.

\begin{proof}
Let us first assume that $0<\alpha<1$.
Using standard arguments from the theory of regularly varying functions, see e.g. \cite{Resnick2008},
it can be easily seen that for any positive sequence $z_n\to\infty$ we have
\begin{align}
\lim_{n\to\infty}\frac{P(\max_{1\leq t\leq n} Y_t > b_n z_n)}{nP(Y_1>b_n z_n)}=1.
\label{large dev max}
\end{align}
Obviously the limit in \eqref{large dev result} is greater or equal than one because $\sum_{t=1}^n Y_t\geq\max_{1\leq t\leq n} Y_t$.
Thus it is only left to prove that it is also smaller. Denote by $Y_{(1)}\geq\ldots\geq Y_{(n)}$ the upper order statistics of $Y_1,\ldots,Y_n$. By decomposing $\sum_t Y_t$ into the sum of $\max_t Y_t$ and lower order terms we see that, for any $\theta\in(0,1)$,
\begin{align*}
\frac{P\left(\sum_{t=1}^n Y_t > b_n x_n,\max_{1\leq t\leq n} Y_t > b_n y_n\right)}{nP(Y_1>b_n\max\{x_n,y_n\})} \leq& \frac{P(\max_{1\leq t\leq n} Y_t > b_n \max\{\theta x_n,y_n\})}{nP(Y_1>b_n \max\{x_n,y_n\})} + \frac{P\left(\sum_{t=2}^n Y_{(t)} > b_n x_n(1-\theta)\right)}{nP(Y_1>b_n\max\{x_n,y_n\})}. 
\end{align*}
By an application of \cite[Proposition 0.8 (iii)]{Resnick2008} one can show similarly as in the 
%By exactly the same arguments as in the 
proof of \eqref{large dev max} that % one shows that
\[ \lim_{\theta\to 1}\lim_{n\to\infty} \frac{P(\max_{1\leq t\leq n} Y_t > b_n \max\{\theta x_n,y_n\})}{nP(Y_1>b_n \max\{x_n,y_n\})} = 1. \]
Hence, it is only left to show that the second summand vanishes as $n\to\infty$. To this end we partition the underlying probability space into $\{Y_{(2)}\leq\epsilon b_n x_n\}\cup\{Y_{(2)}>\epsilon b_n x_n\}$, $\epsilon>0$, to obtain
\begin{align*}
\frac{P\left(\sum_{t=2}^n Y_{(t)} > b_n x_n(1-\theta)\right)}{nP(Y_1>b_n\max\{x_n,y_n\})} \leq&
\frac{P\left(\sum_{t=2}^n Y_{(t)} \1_{\{Y_{(2)}\leq\epsilon b_n x_n\}} > b_n x_n(1-\theta)\right)}{nP(Y_1>b_n\max\{x_n,y_n\})} \nonumber\\
&+ \frac{P\left( Y_{(2)}>\epsilon b_n x_n \right)}{nP(Y_1>b_n\max\{x_n,y_n\})}
=\Sigma_1 + \Sigma_2.
\end{align*}
Denote by $M_n=\max_{1\leq t\leq n} Y_t$ and $z_n=\max\{x_n,y_n\}$.
%, and, for any two sequences $v_n,w_n$, by $v_n\sim w_n$ asymptotic equivalence, i.e., $v_n/w_n\to 1$ as $n\to\infty$. 
Then easy combinatorics and \eqref{large dev max} yield
\begin{align*}
\Sigma_2 =& \frac{1-P\left( Y_{(2)}\leq\epsilon b_n x_n \right)}{nP(Y_1>b_n z_n)} \\
 =& \frac{1-P\left( M_n \leq\epsilon b_n x_n \right)}{nP(Y_1>b_n z_n)} - \frac{nP\left( M_{n-1} \leq\epsilon b_n x_n \right) P(Y_1>\epsilon b_n x_n )}{nP(Y_1>b_n z_n)} \\
=& \frac{P\left( M_n > \epsilon b_n x_n \right)}{nP( Y_1 > \epsilon b_n x_n)}\frac{P( Y_1 > \epsilon b_n x_n)}{P(Y_1>b_n z_n)} - \frac{P(Y_1>\epsilon b_n x_n )}{P(Y_1>b_n z_n)}P\left(M_{n-1} \leq\epsilon b_n x_n\right) \\
\sim& \frac{P(Y_1>\epsilon b_n x_n )}{P(Y_1>b_n z_n)} \left( 1 - P\left(M_{n-1} \leq\epsilon b_n x_n\right) \right) \ston{} 0.
\end{align*}
The convergence to zero follows from $P\left(M_{n-1} \leq\epsilon b_n x_n\right)\to 1$ and, by \cite[Proposition 0.8 (iii)]{Resnick2008}, $\frac{P(Y_1>\epsilon b_n x_n )}{P(Y_1>b_n z_n)}\to\epsilon^{-\alpha}\max\{1,\gamma^{-\alpha}\}$. Thus it is only left to show that $\Sigma_1$ goes to zero. By Markov's inequality and Karamata's Theorem \cite[Theorem 0.6]{Resnick2008} we have that
\begin{align*}
\Sigma_1\leq& \frac{P\left(\sum_{t=1}^n Y_{t} \1_{\{Y_{t}\leq\epsilon b_n x_n\}} > b_n x_n(1-\theta)\right)}{nP(Y_1>b_n\max\{x_n,y_n\})} \\ \leq& \frac{1}{b_n x_n (1-\theta)}\frac{E(Y_1\1_{\{Y_{1}\leq\epsilon b_n x_n\}})}{P(Y_1>b_n z_n)} \sim \frac{1}{(1-\theta)} \frac{\alpha}{1-\alpha} \frac{\epsilon P(Y_{1}>\epsilon b_n x_n)}{P(Y_1>b_n z_n)} \ston{} \frac{1}{(1-\theta)} \frac{\alpha}{1-\alpha} \epsilon^{1-\alpha} \max\{1,\gamma^{-\alpha}\}, % \sto{}{\epsilon\to 0} 0
\end{align*}
which converges to zero as $\epsilon$ goes to zero, since $\alpha<1$. Thus for $0<\alpha<1$ the proof is complete. If $1\leq\alpha<2$, only  $\Sigma_1$ has to be treated differently. 
% Define $\mu_n=E(Y_1\1_{\{Y_1\leq \epsilon b_n x_n\}})$. If $Y_1$ has a finite mean then $b_n x_n/(n \mu_n)\to\infty$. In case $EY_1=\infty$, $\mu_n$ is a slowly varying function, thus $b_n x_n/(n \mu_n)=b_n x_n/(n^{1+\gamma} \mu_n)\, n^\gamma/\mu_n\to\infty$. In either case, a mean-correction argument and Karamata's Theorem imply
The truncated mean $\mu_n=E(Y_1\1_{\{Y_1\leq \epsilon b_n x_n\}})$ either converges to a constant or is a slowly varying function. In either case, we have that $b_n x_n/(n \mu_n)=b_n x_n n^{-1-\delta}\, n^\delta/\mu_n\to\infty$ by assumption. Thus, a mean-correction argument and Karamata's Theorem imply
\begin{align*}
\limsup_{n\to\infty} \Sigma_1 \leq& % \limsup_{n\to\infty}\frac{P\left(\sum_{t=1}^n Y_{t} \1_{\{Y_{t}\leq\epsilon b_n x_n\}} > b_n x_n(1-\theta)\right)}{nP(Y_1>b_n z_n)} \\
% =&
 \limsup_{n\to\infty}\frac{P\left(\sum_{t=1}^n Y_{t} \1_{\{Y_{t}\leq\epsilon b_n x_n\}} - n\mu_n > b_n x_n(1-\theta) - n\mu_n \right)}{nP(Y_1>b_n z_n)} \\
\leq& \frac{1}{(1-\theta)^2} \limsup_{n\to\infty}\frac{1}{b_n^2 x_n^2}\frac{\var(Y_1\1_{\{Y_{1}\leq\epsilon b_n x_n\}})}{P(Y_1>b_n z_n)} 
\leq \frac{1}{(1-\theta)^2} \frac{\alpha/2}{1-\alpha/2}  \epsilon^{2-\alpha}  \max\{1,\gamma^{-\alpha}\} \sto{}{\epsilon\to 0} 0,
\end{align*}
since $\alpha<2$. This completes the proof.
\end{proof}

We finish this section with a few consequences of \cref{large deviation}. Note that \eqref{Z reg var} implies
\begin{align}\label{a_np}
pn P(Z_{11}^2>a_{np}^2x) \ston{} x^{-\alpha/2} \quad\tn{ for each } x>0.
\end{align}
Choosing $Y_t=Z_{1t}^2$, $b_n=a_n^2$, $x_n=xa_{np}^2/a_n^2$ and $y_n=ya_{np}^2/a_n^2$, we have from \cref{large deviation} and \eqref{a_np}, for $\alpha\in(0,2)$, that
\begin{align*}
pP\left(\sum_{t=1}^n Z_{1t}^2 > a_{np}^2 x ,\max_{1\leq t\leq n} Z_{1t}^2 > a_{np}^2 y\right) \ston{} \max\{x,y\}^{-\alpha/2}  \quad\tn{ for each } x,y>0.
\end{align*}
Therefore, by \cite[Proposition 3.21]{Resnick2008}, we obtain the point process convergence
\begin{align}
\sum_{i=1}^p \eps_{a_{np}^{-2}(\sum_{t=1}^n Z_{it}^2,\max_{1\leq t\leq n} Z_{it}^2)} \ston{D} \sum_{i=1}^\infty \eps_{{\Gamma}_i^{-2/\alpha}(1,1)},
\label{pp conv max and sum sq}\end{align}
with $(\Gamma_i)$ as in \eqref{N via Gamma}. For another application of \cref{large deviation}, set $Y_t=|Z_{1t}|$, $b_n=a_n$, $x_n=xa_{np}/a_n$ and $y_n=ya_{np}/a_n$. 
Under the additional assumption
\[ \liminf_{n\to\infty} \frac{p}{n} \in (0,\infty] \]
we have $b_n x_n/n^{1+\gamma}\to\infty$ for some $\gamma<(2-\alpha)/\alpha$, thus, for $\alpha\in(0,2)$,
\begin{align*}
pP\left(\sum_{t=1}^n |Z_{1t}| > a_{np} x ,\max_{1\leq t\leq n} |Z_{1t}| > a_{np} y\right) \ston{} \max\{x,y\}^{-\alpha}  \quad\tn{ for each } x,y>0.
\end{align*}
Therefore we obtain as before
\begin{align}
\sum_{i=1}^p \eps_{a_{np}^{-1}(\sum_{t=1}^n |Z_{it}|,\max_{1\leq t\leq n} |Z_{it}|)} \ston{D} \sum_{i=1}^\infty \eps_{{\Gamma}_i^{-1/\alpha}(1,1)}.
\label{pp conv max and sum abs}\end{align}
The result of the following proposition is also a consequence of \cref{large deviation}.

\begin{proposition}
Let $(Z_{it})$ be as in \eqref{Z reg var} with $0<\alpha<2$. Suppose that \eqref{beta con} is satisfied for some $0<\beta<\infty$. Then
\[ a_{np}^{-2} \max_{1\leq i<j\leq p} \sum_{t=1}^n |Z_{it} Z_{jt}| \ston{P} 0. \]
\label{lemma non-diag elemements}
\end{proposition}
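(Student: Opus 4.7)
The strategy is a union bound combined with the large deviation estimate of \cref{large deviation}, exploiting the fact that for two independent regularly varying random variables of common tail index $\alpha$, the product again has tail index $\alpha$ (not $\alpha/2$, as is the case for the square). For fixed $\epsilon>0$, the union bound gives
\[ P\!\left(a_{np}^{-2}\max_{1\le i<j\le p}\sum_{t=1}^n|Z_{it}Z_{jt}|>\epsilon\right)\le\binom{p}{2}\,P\!\left(\sum_{t=1}^n|Z_{1t}Z_{2t}|>a_{np}^2\epsilon\right). \]
Writing $Y_t=|Z_{1t}Z_{2t}|$, a standard computation (conditioning on $|Z_{21}|$ and splitting the integral $\int_0^\infty P(|Z_{11}|>u/s)\,dF_{|Z_{21}|}(s)$ at $s=\sqrt u$, with Potter's bounds applied to the slowly varying $L$ from \eqref{a_n}) yields $P(Y_1>u)=u^{-\alpha}\tilde L(u)$ for some slowly varying $\tilde L$, so that $Y_1$ has tail index $\alpha$.

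I would then invoke \cref{large deviation} on the iid nonnegative sequence $(Y_t)_t$ with its natural normalizing sequence $b_n$ of regular variation index $1/\alpha$, choosing $x_n=a_{np}^2\epsilon/b_n$ and $y_n=a_{np}^2\epsilon/(2nb_n)$. Then $x_n\to\infty$, $x_n/y_n=2n\to\infty\in(0,\infty]$, and for $1\le\alpha<2$ the side condition reduces to $a_{np}^2/n^{1+\delta}=(np)^{2/\alpha}L(np)^2/n^{1+\delta}\to\infty$, which holds for any $\delta\in(0,\,2/\alpha-1)$. Since on $\{\max_tY_t\le b_ny_n\}$ one has $\sum_tY_t\le nb_ny_n=a_{np}^2\epsilon/2$, the event $\{\sum_tY_t>a_{np}^2\epsilon,\,\max_tY_t\le b_ny_n\}$ is empty, and \cref{large deviation} yields $P(\sum_{t=1}^n Y_t>a_{np}^2\epsilon)\sim n\,P(Y_1>a_{np}^2\epsilon)$.

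Plugging in the tail estimate for $Y_1$ and using $a_{np}^{2\alpha}=(np)^2L(np)^{2\alpha}$ from \eqref{a_n}, the right-hand side of the union bound is bounded above by $Cp^2n(a_{np}^2\epsilon)^{-\alpha}\tilde L(a_{np}^2\epsilon)=C\epsilon^{-\alpha}n^{-1}L(np)^{-2\alpha}\tilde L(a_{np}^2\epsilon)$, which tends to zero because, under \eqref{beta con}, $np=O(n^{1+\beta})$ and both slowly varying factors grow as $n^{o(1)}$. The main technical point is the verification of the side condition of \cref{large deviation} for $\alpha\in[1,2)$, which is what dictates the careful choice of $(x_n,y_n)$; once this is in hand, the crucial gain is the factor $a_{np}^{-\alpha}\sim(np)^{-1}$ that arises because $Y_1$ has tail index $\alpha$ rather than $\alpha/2$, and this extra factor is exactly what is needed to beat the $p^2$ coming from the union bound.
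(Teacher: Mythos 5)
Your proof is correct and takes essentially the same route as the paper's: a union bound over the $\binom{p}{2}$ pairs, the fact that $|Z_{1t}Z_{2t}|$ is regularly varying with index $\alpha$ (the paper cites Embrechts--Goldie rather than computing the tail directly), an application of \cref{large deviation} to reduce $P(\sum_t Y_t>a_{np}^2\epsilon)$ to $nP(Y_1>a_{np}^2\epsilon)$, and the Potter-bound calculation showing $p^2nP(Y_1>a_{np}^2\epsilon)\to 0$ under \eqref{beta con}. The only (immaterial) difference is that the paper simply takes $y_n=0$ in \cref{large deviation}, whereas you choose a small positive $y_n$ to make the complementary event empty; both verifications of the side condition for $\alpha\in[1,2)$ agree.
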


\begin{proof}
By \cite{Embrechts1980}, the iid random variables $Y_t=|Z_{1t}Z_{2t}|$ are regularly varying with tail index $\alpha$ with some normalizing sequence $b_n$. Thus, there exists a slowly varying $L_1$ such that $P(Y_1>x)=x^{-\alpha}L_1(x)$. Using \eqref{a_n} this implies
\[ p^2nP(Y_1>a_{np}^2 \epsilon ) = n^{-1} \epsilon^{-\alpha} \, L(np)^{-2\alpha} \, L_1\left((np)^{2/\alpha}L(np)^2\epsilon\right). \]
By Potter's bound, see, e.g., \cite[Proposition 0.8 (ii)]{Resnick2008}, for any slowly varying function $\tilde{L}$ and any $\delta>0$ there exist $c_1,c_2>0$ such that $c_1 n^{-\delta} < \tilde{L}(n) < c_2 n^{\delta}$ for $n$ large enough. An application of this bound together with assumption \eqref{beta con} shows that
\begin{align}
p^2 n P(Y_1> a_{np}^2 \epsilon) \ston{} 0.
\label{a_np2strong}
\end{align}
Hence, using \cref{large deviation} with $x_n=a_{np}^2/b_n \epsilon$ and $y_n=0$ yields
\[ P\left(\max_{1\leq i<j\leq p} \sum_{t=1}^n |Z_{it} Z_{jt}| > a_{np}^2 \epsilon\right) \leq p^2 P\left( \sum_{t=1}^n Y_t > a_{np}^2 \epsilon \right) \ston{} 0, \]
since $b_n x_n/n^{1+\gamma} = a_{np}^2/n^{1+\gamma}\to \infty$ for $\alpha<2$ and some $\gamma<(2-\alpha)/\alpha$.
\end{proof}

\subsection{Convergence in Operator Norm}\label{opnorm}

Denote by $D=\diag(XX^\T)$ the diagonal of the matrix $XX^\T$, i.e., $D_{ii}=(XX^\T)_{ii}$ and $D_{ij}=0$ for $i\neq j$. In this section we show that $a_{np}^{-2}(XX^\T-D)$ converges in probability to 0 in operator norm. This implies that the off-diagonal elements of $a_{np}^{-2}XX^\T$ do not contribute to the limiting eigenvalue spectrum. Recall that, for a real $p\times n$ matrix $A$, the operator $2$-norm $\norm{A}_2$ is the square root of the largest eigenvalue of $AA^\T$, and the infinity-norm is given by $\norm{A}_\infty=\max_{1\leq i\leq p}\sum_{t=1}^n |A_{it}|$.\\

In the upcoming \cref{th conv op norm} we only deal with the case where $0<\alpha<2$. Note that \cref{th conv op norm} holds under a much more general setting than assumed in \cref{th1} (i) by allowing for an arbitrary dependence structure within the rows of $X$.
% The following result holds under a much more general setting than assumed in \cref{th1} by allowing for an arbitrary dependence structure within the rows of $X$.

\begin{proposition}\label{th conv op norm}
Let $X=(X_{it})_{i,t}$ be a $p\times n$ random matrix whose entries are identically distributed with tail index $\alpha\in(0,2)$ and normalizing sequence $(a_n)$. Assume that the rows of $X$ are independent. 
Suppose that \eqref{beta con} holds for some $\beta>0$. If $1<\alpha<2$, assume additionally that $\beta<\frac{2-\alpha}{\alpha-1}$. Then we have 
\begin{align}
a_{np}^{-2} \norm{XX^\T-D}_2 \ston{P} 0.
\label{eq conv opnorm}
\end{align}
\end{proposition}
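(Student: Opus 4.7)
The goal is $a_{np}^{-2}\|A\|_2\to 0$ in probability for the off-diagonal symmetric matrix $A := XX^\T-D$. My strategy is to truncate the entries of $X$ at a threshold of order $a_{np}$, handle the tail part by low-rank and large-deviation bounds, and control the bounded main part by a trace/moment method. Fix a small $\delta>0$ and decompose $X=\bar X+\hat X$ with $\bar X_{it}:=X_{it}\1\{|X_{it}|\le \delta a_{np}\}$. The Potter-bound argument behind \eqref{a_np2strong}, together with \eqref{Z reg var} and \eqref{beta con}, yields $pn\,P(|X_{11}|>\delta a_{np})=O(1)$, so with probability tending to one $\hat X$ has only $O_P(1)$ nonzero entries, and in particular rank $O_P(1)$.

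\textbf{Step 1 (tail part).} Since $XX^\T-\bar X\bar X^\T=\bar X\hat X^\T+\hat X\bar X^\T+\hat X\hat X^\T$, each summand has rank bounded by the number of rows in which $\hat X$ is nonzero, hence rank $O_P(1)$, and its spectral norm can be controlled via large-deviation estimates applied to the row $\ell^2$-norms of $\bar X$ and to the size of the (few) entries of $\hat X$, yielding $o_P(a_{np}^2)$. An analogous computation shows $\|D-\diag(\bar X\bar X^\T)\|_2=o_P(a_{np}^2)$, so it remains to bound the main term $\bar B:=\bar X\bar X^\T-\diag(\bar X\bar X^\T)$.

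\textbf{Step 2 (bounded moment method).} By Markov applied to $\|\bar B\|_2^{2k}\le\tr(\bar B^{2k})$, it is enough to prove $E\,\tr(\bar B^{2k})=o(a_{np}^{4k})$ for some $k$. Expanding $E\,\tr(\bar B^{2k})=\sum_{i_1,\dots,i_{2k}}E[\bar B_{i_1 i_2}\bar B_{i_2 i_3}\cdots\bar B_{i_{2k}i_1}]$, row independence factorises the expectation across distinct rows, while the zero diagonal of $\bar B$ forces each row index to repeat at least twice. Karamata's theorem gives $E[X_{11}^{2m}\1\{|X_{11}|\le \delta a_{np}\}]\sim c_m(\delta a_{np})^{2m}P(|X_{11}|>\delta a_{np})$ for every $m>\alpha/2$, turning the trace into a polynomial in $p$, $n$ and $P(|X_{11}|>\delta a_{np})$; combined with the decay in \eqref{a_np2strong} and \eqref{beta con}, choosing $k$ large delivers the bound.

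\textbf{Main obstacle.} The principal difficulty is Step 2. Since the proposition allows arbitrary dependence within each row of $X$, the standard Wigner-style double-tree rigidity is unavailable; one must perform the combinatorial count by grouping terms only according to which rows they touch and control the residual within-row correlations by Cauchy--Schwarz or by absorbing them into the truncation bound $(\delta a_{np})^2$. The restriction $\beta<(2-\alpha)/(\alpha-1)$ for $\alpha\in(1,2)$ emerges precisely when balancing the free-row-index count (a power of $p$) against the truncated-moment growth (a power of $a_{np}$), and extracting this sharp exponent is the main technical hurdle.
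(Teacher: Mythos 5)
Your route (truncate the entries at $\delta a_{np}$, remove the few large entries by a low\--rank argument, control the bounded part by a trace bound) is genuinely different from the paper's, which simply bounds $\norm{XX^\T-D}_2$ by the maximum absolute row sum $\norm{XX^\T-D}_\infty$, applies a union bound over rows, truncates the \emph{products} $|X_{1t}X_{jt}|$ at level $a_{np}^2$, and finishes with H\"older, Markov and Karamata; there the condition $\beta<(2-\alpha)/(\alpha-1)$ falls out of requiring $(np)^{\gamma-1}/n\to0$ for a H\"older exponent $\gamma$ slightly above $\alpha$, and within\--row dependence never enters because only cross\--row independence (making $X_{1t}X_{jt}$ regularly varying of index $\alpha$) is used. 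Your Step 2, however, does not go through as described, and it is the only step carrying real weight. First, the zero diagonal of $\bar B$ only forces $i_l\neq i_{l+1}$; it does not force a row value to recur in the cycle, and since the entries are neither centered nor independent within a row, a row appearing once contributes a factor $E[\bar X_{it}\bar X_{is}]$ which can be as large as $E[\bar X_{11}^2]\sim c\,\delta^{2-\alpha}a_{np}^2(np)^{-1}$ (take all entries of a row equal). Second, and fatally for the plan, ``choosing $k$ large'' makes the bound worse, not better: with $X_{it}=Y_i$ for iid $Y_i$ (allowed by the hypotheses) one has $\bar B_{ij}=n\bar Y_i\bar Y_j$, every closed walk contributes nonnegatively, and
\begin{align*}
E\,\tr(\bar B^{2k})\;\ge\;n^{2k}\sum_{i\neq j}E\big[\bar Y_i^{2k}\bar Y_j^{2k}\big]\;\asymp\;\delta^{2(2k-\alpha)}\,n^{2k-2}\,a_{np}^{4k},
\end{align*}
which for every $k\ge2$ is of strictly larger order than $a_{np}^{4k}$, so Markov yields nothing. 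Only $k=1$ survives, and the best uniform bound there is $E\,\tr(\bar B^{2})\le p^2n^2(E\bar X_{11}^2)^2\asymp\delta^{2(2-\alpha)}a_{np}^{4}$, i.e.\ $P(\norm{\bar B}_2>\epsilon a_{np}^2)\le C\epsilon^{-2}\delta^{2(2-\alpha)}$ --- a constant that does not tend to $0$ with $n$ and must be killed by a second limit $\delta\to0$, a two\--limit structure your plan does not set up. Consistent with this, the constraint on $\beta$ does not actually emerge from your computation (the $k=1$ bound is free of $\beta$), so the claimed origin of that hypothesis is unsubstantiated.

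There is also a smaller, repairable error in Step 1: $\norm{\hat X\hat X^\T}_2$ and $\norm{D-\diag(\bar X\bar X^\T)}_2$ are \emph{not} $o_P(a_{np}^2)$. Both contain the diagonal term $\sum_t X_{it}^2\1_{\{|X_{it}|>\delta a_{np}\}}$, whose maximum over $i$ is of exact order $a_{np}^2$ --- these are precisely the extremes that drive the whole theorem. The two contributions cancel identically, so the argument survives if you work with off\--diagonal parts throughout (note $\bar X\hat X^\T$ already has zero diagonal, and the off\--diagonal of $\hat X\hat X^\T$ vanishes with probability tending to one since $p^2n\,P(|X_{11}|>\delta a_{np})^2\asymp n^{-1}$), but the two displayed claims are false as stated.
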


\begin{proof}
Since $\norm{XX^\T-D}_2\leq\norm{XX^\T-D}_\infty$, it is enough to show that for every $\epsilon\in(0,1)$,
\begin{align*}
P\left( \max_{i=1,\ldots,p} \sum_{\substack{j=1 \\ j\neq i}}^p \left| \sum_{t=1}^n X_{it} X_{jt} \right| > a_{np}^2 \epsilon \right) &\leq p P\left( \sum_{j=2}^p \sum_{t=1}^n |X_{1t} X_{jt}| > a_{np}^2 \epsilon \right)\ston{}0.
\end{align*}
By partitioning the underlying probability space into $\{\max_{j,t}|X_{1t}X_{jt}|\leq a_{np}^2\}$ and its complement, we obtain that
\begin{align*}
p P\left( \sum_{j=2}^p \sum_{t=1}^n |X_{1t} X_{jt}| > a_{np}^2 \epsilon \right) \leq&
p P\left( \sum_{j=2}^p \sum_{t=1}^n |X_{1t} X_{jt}| \1_{\{|X_{1t} X_{jt}|\leq {a}_{np}^2\}}  > {a_{np}^2}\epsilon \right) \\ &+ p P\left( \max_{2\leq j\leq p} \max_{1\leq t\leq n} |X_{1t} X_{jt}|  > {a}_{np}^2\epsilon \right) = \mrm{I} + \mrm{II}.
\end{align*}
The same argument used for \eqref{a_np2strong} shows that $\mrm{II} \leq p^2 n P( |X_{11} X_{21}|> a_{np}^2 ) \ston{} 0$ by independence of the rows of $X$. To deal with term $\mrm{I}$ we   first assume that $\alpha>1$ and choose some $\gamma\in(\alpha,2)$. Hölder's inequality shows that
\[ \left(\sum_{j=2}^p \sum_{t=1}^n |X_{1t} X_{jt}|\right)^\gamma \leq \left(\sum_{j=2}^p \sum_{t=1}^n |X_{1t} X_{jt}|^\gamma\right) (np)^{\gamma-1}, \]
and therefore
\[ \mathrm{I}\leq p P\left( \sum_{j=2}^p \sum_{t=1}^n |X_{1t} X_{jt}|^\gamma \1_{\{|X_{1t} X_{jt}|\leq {a}_{np}^2\}}  > \frac{a_{np}^{2\gamma}}{(np)^{\gamma-1}} \epsilon \right). \]
Note that $|X_{1t} X_{jt}|^\gamma $ has regularly varying tails with index $\alpha/\gamma<1$. Hence we can apply Markov's Inequality and Karamata's Theorem to infer that
\begin{align}
\mrm{I} &\leq %c_1 \frac{p (np)^{\gamma-1}}{a_{np}^{2\gamma}} E( \sum_{j=2}^p \sum_{t=1}^n |X_{1t} X_{jt}| \1_{\{|X_{1t} X_{jt}|\leq {a}_{np}^2\}} ) \\
 c_1 \frac{p^2n (np)^{\gamma-1}}{a_{np}^{2\gamma}} E\left( |X_{11} X_{21}|^\gamma \1_{\{|X_{11} X_{21}|\leq {a}_{np}^2\}} \right) \sim c_2 p^2 n (np)^{\gamma-1} P(|X_{11} X_{21}| > {a}_{np}^2). \label{eqOpNormProof}
\end{align}
Therefore, the proof of \cref{lemma non-diag elemements} shows that the term in \eqref{eqOpNormProof} goes to zero if $(np)^{\gamma-1}/n$ does. In view of assumption \eqref{beta con} this is true for $\beta<{(2-\gamma)}/({\gamma-1})$. 
Since we can choose $\gamma$ arbitrary close to $\alpha$ it suffices that $\beta<{(2-\alpha)}/({\alpha-1})$. 
If $\alpha<1$ we do not need Hölder's inequality since the above argument can be applied with $\gamma=1$, thus it suffices that \eqref{beta con} holds for some $\beta<\infty$. For the remaining case $\alpha=1$, observe that, for any given $\beta<\infty$, we choose $\gamma$ arbitrarily close to $1$ so that $(np)^{\gamma-1}/n\to 0$.
\end{proof}

The next proposition improves the previous \cref{th conv op norm} for $5/3<\alpha<2$ at the expense of the additional assumption that the rows of $X$ are realizations of a linear process. Furthermore, \cref{th conv op norm 2} also covers the case where $2\leq\alpha<4$.
%The above result can be improved for $5/3<\alpha<4$ if we assume the rows of $X$ to be realizations of a linear process. A  centralization in case $\alpha\geq 2$ is not need here since we only deal with the difference $XX^\T-D$, 

\begin{proposition}\label{th conv op norm 2}
The assumptions of \cref{th1} (i) imply \eqref{eq conv opnorm}.
\end{proposition}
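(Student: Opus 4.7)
The plan is to exploit the linear-process structure of the rows of $X$, which permits a variance (Frobenius norm) estimate on the off-diagonal entries of $XX^\T$ that is unavailable in the general iid setting of \cref{th conv op norm}. The key dichotomy is whether the entries have a finite second moment: for $\alpha>2$ the second moments of off-diagonal entries can be bounded directly, whereas for $\alpha\leq 2$ I would first truncate the noise at a suitable level and apply the same variance argument to the bulk, while controlling the sparse tail separately. Note that under \eqref{summability c_j} with $\alpha>1$ one has $\sum_j|c_j|<\infty$ (since $|c_j|\to 0$ forces $|c_j|\leq 1$ eventually, and $|c_j|\leq|c_j|^\delta$ there), which is used throughout.

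Suppose first $\alpha>2$, so that $EZ_{11}^2<\infty$ and (by the standing assumption $EZ_{11}=0$) $EX_{it}=0$. Independence of rows and stationarity give, for $i\neq k$,
\[
E\bigl[((XX^\T)_{ik})^2\bigr] \;=\; \sum_{t,s=1}^n \gamma(t-s)^2 \;\leq\; n\sum_{h\in\Z}\gamma(h)^2,
\]
where $\gamma(h)=EZ_{11}^2\sum_j c_j c_{j+h}$ is the autocovariance of a row. Young's inequality yields $\sum_h\gamma(h)^2\leq (EZ_{11}^2)^2\|c\|_1^2\|c\|_2^2<\infty$, and therefore $E\|XX^\T-D\|_F^2\leq Cp^2n$. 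Chebyshev's inequality then gives $a_{np}^{-2}\|XX^\T-D\|_2\to 0$ in probability whenever $p^2n=o(a_{np}^4)$; writing $a_{np}^4\sim(np)^{4/\alpha}L(np)^4$, this reduces to $\beta<(4-\alpha)/(2(\alpha-2))$, which is strictly weaker than the conditions stated in \cref{th1}~(iii) and \cref{th1}~(iv), and is therefore automatically satisfied in those regimes.

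For $\alpha\leq 2$ the variance is infinite or borderline, so I would truncate at $K_n=a_{np}^{1-\eta}$ for some small $\eta>0$. Set $\tilde Z^{\leq}_{it}=Z_{it}\1\{|Z_{it}|\leq K_n\}-E[Z_{it}\1\{|Z_{it}|\leq K_n\}]$ and $\tilde Z^{>}=Z-\tilde Z^{\leq}$, both having mean zero (using $EZ_{11}=0$ for $\alpha>1$), and form the corresponding linear processes $X^{\leq}$ and $X^{>}$. The autocovariance bound applied to $X^{\leq}$ yields
\[
E\|X^{\leq}(X^{\leq})^\T-D^{\leq}\|_F^2 \;\leq\; Cp^2n\sigma_K^4, \qquad \sigma_K^2=E[(\tilde Z^{\leq})^2],
\]
and by Karamata's theorem $\sigma_K^2=O(K_n^{2-\alpha})$ for $\alpha<2$ and $\sigma_K^2=O(\log K_n)$ for $\alpha=2$. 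Substituting $K_n=a_{np}^{1-\eta}$ gives $p^2n\sigma_K^4=O(a_{np}^{4-2\eta(2-\alpha)})=o(a_{np}^4)$ for any admissible $\beta$. For the tail, Cauchy--Schwarz plus $\|c\|_1<\infty$ yields $\|X^{>}\|_F^2\leq\|c\|_1^2\sum_{i,s}(\tilde Z^{>}_{i,s})^2$, and a second application of Karamata gives $E\sum_{i,s}(\tilde Z^{>}_{i,s})^2=O(npK_n^{2-\alpha})=O(a_{np}^{2-\eta(2-\alpha)})=o(a_{np}^2)$. Hence $\|X^{>}(X^{>})^\T\|_2\leq\|X^{>}\|_F^2=o_p(a_{np}^2)$; the cross term is handled via $\|X^{\leq}(X^{>})^\T\|_2\leq\|X^{\leq}\|_2\|X^{>}\|_F$ together with the crude bound $\|X^{\leq}\|_2^2\leq Cn\sigma_K^2$, and the diagonal mismatch $D-D^{\leq}-D^{>}$, consisting of cross-product terms of type $2\sum_t\tilde Z^{\leq}_{it}\tilde Z^{>}_{it}$ filtered through $(c_j)$, is absorbed similarly.

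The main obstacle is the boundary case $\alpha=2$, where $\sigma_K^2$ grows only as $\log K_n$ so the bulk estimate is tight and the centering $n\mu_{X,\alpha}$ from \eqref{mu X alpha} must be aligned with the truncation-induced drift. In the regime $\alpha\in(5/3,4)$, the zero-mean assumption on $Z_{11}$ and the tail-balancing condition \eqref{balancing} are the key structural inputs: they ensure that the centered truncated noise $\tilde Z^{\leq}$ remains centered after convolution with $(c_j)$, so that the Frobenius-based second moment computations are not inflated by spurious mean corrections coming from the truncation.
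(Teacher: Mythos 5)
Your Frobenius--norm argument for $\alpha\in(2,4)$ is correct and is a genuinely different route from the paper's: since $\norm{A}_2\leq\norm{A}_F$ and, by independence of the rows, the off-diagonal entries of $XX^\T$ satisfy $E[((XX^\T)_{ik})^2]=\sum_{t,s}\gamma(t-s)^2\leq n(EZ_{11}^2)^2\norm{c}_1^2\norm{c}_2^2$, Chebyshev reduces everything to $p^2n=o(a_{np}^4)$, i.e.\ $\beta<\tfrac{4-\alpha}{2(\alpha-2)}$, which is indeed implied by the hypotheses of \cref{th1} (iii)--(iv). The paper instead bounds $\norm{\cdot}_\infty$, and the resulting union bound over rows (threshold $a_{np}^2\epsilon/p$ per entry) forces third- and fourth-moment computations with the truncated noise; your second-moment bound on the Hilbert--Schmidt norm avoids this entirely and even yields a less restrictive growth condition on $p$ in this range. (You should still cover $\alpha=2$ with $EZ_{11}^2<\infty$ by the same bound, and note that the $\alpha=2$, $EZ_{11}^2=\infty$ case needs the truncated treatment.)

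The treatment of $\alpha\leq 2$, however, has a fatal gap. You invoke Karamata to claim $E[(\wt Z^{>}_{11})^2]=O(K_n^{2-\alpha})$, but Karamata gives that rate for the \emph{truncated} moment $E[Z_{11}^2\1_{\{|Z_{11}|\leq K_n\}}]$; the complementary moment $E[Z_{11}^2\1_{\{|Z_{11}|>K_n\}}]$ is $+\infty$ for every $K_n$ whenever $EZ_{11}^2=\infty$, so $E\norm{X^{>}}_F^2=\infty$ and the expectation bound collapses. The failure is not merely technical: the assertion $\norm{X^{>}(X^{>})^\T}_2=o_P(a_{np}^2)$ is false, because $\norm{X^{>}(X^{>})^\T}_2\geq\max_i\sum_t(X^{>}_{it})^2\geq\max_{i,t}(X^{>}_{it})^2$, and the largest entries of $X^{>}$ are precisely the extremes of order $a_{np}$ that generate the limiting point process (they survive your truncation since $K_n=a_{np}^{1-\eta}\ll a_{np}$). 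What must be shown small is only the \emph{off-diagonal} part of $X^{>}(X^{>})^\T$ and of the cross terms, and a Frobenius/trace bound cannot distinguish diagonal from off-diagonal. Controlling those off-diagonal entries requires exploiting independence across rows --- two independent rows rarely carry large values at matching times --- which leads to first-moment estimates of the form $E|Z_{11}\1_{\{|Z_{11}|>a_{np}\}}|\sim c\,a_{np}(np)^{-1}$ combined with Markov's inequality, as in the paper's terms $\mrm{II}$--$\mrm{IV}$. These in turn need $\alpha>1$, $EZ_{11}=0$ and the tail balancing condition, which is exactly why the paper first disposes of $\alpha\leq 5/3$ via \cref{th conv op norm} (where no centering is assumed) and runs the truncation argument only on $(5/3,2)$. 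As written, your proposal covers neither $\alpha\leq 5/3$ (where $EZ_{11}=0$ is not assumed and even $E|Z_{11}\1_{\{|Z_{11}|>K_n\}}|$ may be infinite) nor, for the reason above, the range $(5/3,2]$.
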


\begin{proof}
%By \cref{th conv op norm} it suffices to show that, for $\alpha\in(5/3,2)$, the assumption 
%\begin{align}
%\lim_{n\to\infty} \frac{p}{\sqrt{n}}=0
%\end{align}
%implies convergence in operator norm in the sense of \eqref{eq conv opnorm}.
In this proof, $c$ denotes a positive constant that may vary from expression to expression.
Define
\begin{align*}
Z_{it}^{L} &= Z_{it} \1_{\{ |Z_{it}|\leq a_{np} \}}, \quad X_{it}^{L} = \sum_k c_k Z_{i,t-k}^{L}, \\
Z_{it}^{U} &= Z_{it} \1_{\{ |Z_{it}|> a_{np} \}}, \quad X_{it}^{U} = \sum_k c_k Z_{i,t-k}^{U}.
\end{align*}
Using $\norm{XX^\T-D}_2\leq\norm{XX^\T-D}_\infty$ as before we have
\begin{align*}
P\left(\norm{XX^\T-D}_2 > a_{np}^{2}\epsilon\right) \leq& p P\left(\sum_{j=2}^p \left| \sum_{t=1}^n X_{1t} X_{jt} \right| > a_{np}^2\epsilon \right) \\
\leq& p P\left(\sum_{j=2}^p \left| \sum_{t=1}^n X_{1t}^{L} X_{jt}^{L} \right| > \frac{a_{np}^2}{4}\epsilon \right) + p P\left(\sum_{j=2}^p \left| \sum_{t=1}^n X_{1t}^{L} X_{jt}^{U} \right| > \frac{a_{np}^2}{4}\epsilon \right) \\
&+ p P\left(\sum_{j=2}^p \left| \sum_{t=1}^n X_{1t}^{U} X_{jt}^{L} \right| > \frac{a_{np}^2}{4}\epsilon \right) + p P\left(\sum_{j=2}^p \left| \sum_{t=1}^n X_{1t}^{U} X_{jt}^{U} \right| > \frac{a_{np}^2}{4}\epsilon \right) \\
=& \mathrm{I} + \mathrm{II} + \mathrm{III} + \mathrm{IV}.
\end{align*}
We will show that each of theses terms converges to zero. To this end, note that $E|Z_{11}^{L}|$ converges to a constant, and, by Karamata's Theorem, 
\[ E|Z_{11}^{U}| \sim c a_{np} P(|Z_{11}|>a_{np}) \sim c a_{np} (np)^{-1}, \quad n\to\infty. \]
Therefore, by Markov's inequality, we have
\begin{align*}
\mathrm{II}\leq \frac{4p}{a_{np}^2\epsilon} \sum_{j=2}^p \sum_{t=1}^n \sum_{k,l} |c_k c_l| \, E|Z_{1,t-k}^{L}| \, E|Z_{j,t-l}^{U}| \sim c \left(\sum_k |c_k|\right)^2 \frac{p^2 n}{a_{np}^2} a_{np} (np)^{-1} = c \frac{p}{a_{np}}, % = c \, L(np)^{-1} p^{1-1/\alpha} n^{-1/\alpha} \to 0, \quad n\to\infty.
\end{align*}
and, by \eqref{a_n}, we obtain that this is equal to $c \, L(np)^{-1} p^{1-1/\alpha} n^{-1/\alpha} \to 0$ as $n\to\infty$. By symmetry, $\mathrm{III}$ can be handled the same way. It is easy to see that term $\mathrm{IV}$ is of even lower order, namely
\[ c \frac{p^2 n}{a_{np}^2} (a_{np} (np)^{-1})^{2} = c n^{-1} \to 0. \]
Thus it is only left to show that $\mrm{I}$ converges to zero. To this end, we use Karamata's Theorem to obtain
\begin{align*}
E\left[(Z_{11}^{L})^2\right]=E\left[Z_{11}^2\1_{\{ |Z_{11}|\leq a_{np} \}}\right] \sim c a_{np}^2 P(|Z_{11}|>a_{np}) \sim c a_{np}^2 (np)^{-1}.
\end{align*}
Since $Z_{11}$ satisfies the tail balancing condition \eqref{balancing}, and $EZ_{11}=0$, we can apply Karamata's Theorem to the positive and the negative tail of $Z_{11}^L$, thus, for $q\notin\{0,\frac12,1\}$,
\begin{align*}
\xi_n \coloneqq& E[Z_{11}^{L}]=E[Z_{11}\1_{\{ |Z_{11}|\leq a_{np} \}}] = - E[Z_{11}\1_{\{ |Z_{11}|> a_{np} \}}] = - E[Z_{11}\1_{\{ Z_{11}> a_{np} \}}] + E[-Z_{11}\1_{\{ - Z_{11}> a_{np} \}}] \\
\sim& - q \frac{\alpha}{\alpha-1} a_{np}  P(|Z_{11}|> a_{np}) + (1-q) \frac{\alpha}{\alpha-1} a_{np} P(|Z_{11}|> a_{np}) \sim (1-2q) \frac{\alpha}{\alpha-1} a_{np} (np)^{-1}.
\end{align*}
Clearly, for any $0\leq q\leq 1$, one therefore has
\[ \frac{np\xi_n}{a_{np}} \to (1-2q)\frac{\alpha}{\alpha-1}. \]
As a consequence we obtain for $\mu_n=E(X_{11}^{L} X_{21}^{L}) = (EX_{11}^{L})^2=\xi_n^2$ that
\[ \frac{\mu_n pn}{a_{np}^2} = (np)^{-1} \left(\frac{np\xi_n}{a_{np}}\right)^2 \left(\sum_k c_k\right)^2 \to 0. \]
%As a consequence we obtain, with $d\coloneqq (1-2q) \frac{\alpha}{\alpha-1}$, that
%\[ \mu_n=E(X_{11}^{L} X_{21}^{L}) = (EX_{11}^{L})^2 = \left(\sum_k c_k\right)^2 \xi_n^2 \sim d^2 a_{np}^2 (np)^{-2}, \]
%and $(\mu_n pn)/a_{np}^2\sim c (np)^{-1}\to 0$.
Therefore we obtain for summand $\mrm{I}$ that
\begin{align*}
\mathrm{I} = p P\left(\sum_{j=2}^p \left| \sum_{t=1}^n X_{1t}^{L} X_{jt}^{L} \right| > \frac{a_{np}^2}{4}\epsilon \right) \leq& p^2 P\left(\left| \sum_{t=1}^n X_{1t}^{L} X_{2t}^{L} \right| > \frac{a_{np}^2}{4p}\epsilon \right) \sim p^2 P\left(\left| \sum_{t=1}^n{X_{1t}^{L} X_{2t}^{L}} - n\mu_n \right| > \frac{a_{np}^2}{4p}\epsilon \right).
\end{align*}
Therefore, it is only left to show that
\begin{align}\label{mainterm}
p^2 P\left(\left| \sum_{t=1}^n{X_{1t}^{L} X_{2t}^{L}} - n\mu_n \right| > \frac{a_{np}^2}{4p}\epsilon \right)\to 0,
\end{align}
with
\[ \mu_n = (EX_{11}^L)^2 = \left(\sum_k c_k\right)^2 (EZ_{11}^L)^2 = O\left(\frac{a_{np}^2}{(np)^2}\right). \]
Now we have to treat the cases $\alpha<2$, $2\leq\alpha<3$, and $3\leq\alpha<4$ separately.\\

Let $\alpha<2$. 
By \cref{th conv op norm} it suffices to show that, for $\alpha\in(5/3,2)$, the assumption 
\begin{align}
\lim_{n\to\infty} \frac{p}{\sqrt{n}}=0
\end{align}
implies convergence in operator norm in the sense of \eqref{eq conv opnorm}.
Since we correct by the mean, Markov's inequality yields
\begin{align}
p^2  P\left(\left| \sum_{t=1}^n{X_{1t}^{L} X_{2t}^{L}} - n\mu_n \right| > \frac{a_{np}^2}{4p}\epsilon \right) \leq& \frac{16p^4}{a_{np}^4\epsilon^2} \var\left(\sum_{t=1}^n X_{1t}^{L} X_{2t}^{L}\right) \nonumber\\
 =& \frac{16p^4}{a_{np}^4\epsilon^2} \sum_{t,t'=1}^n \sum_{k,k',l,l'} c_k c_{k'} c_l c_{l'} \cov\left(Z_{1,t-k}^{L}Z_{2,t-l}^{L},Z_{1,t'-k'}^{L}Z_{2,t'-l'}^{L}\right). \label{sum cov}
\end{align}
Due to the independence of the $Z$'s, the covariance in the last expression is  non-zero iff $t-k= t'-k'$ or $t-l= t'-l'$. This gives us three distinct cases we deal with separately. First, assume that both $t-k= t'-k'$ and $t-l= t'-l'$. Then the covariance in \eqref{sum cov} is equal to $\var(Z_{11}^{L}Z_{2,1}^{L})$ and so bounded by
\begin{align*}
 E[(Z_{11}^{L})^2(Z_{21}^{L})^2] = (E(Z_{11}^{L})^2)^2 \sim ( c a_{np}^2 (np)^{-1} )^2 \sim c a_{np}^4 (np)^{-2}.
\end{align*}
Second, let $t-k= t'-k'$ but $t-l\neq t'-l'$. Then the covariance becomes
\begin{align*}
\cov(Z_{1,t-k}^{L}Z_{2,t-l}^{L},Z_{1,t'-k'}^{L}Z_{2,t'-l'}^{L}) =& E((Z_{1,t-k}^{L})^2 Z_{2,t-l}^{L} Z_{2,t'-l'}^{L}) - \xi_n^4\\
=& E((Z_{1,t-k}^{L})^2) \xi_n^2 - \xi_n^4 \\
\sim& c a_{np}^2 (np)^{-1} ( \pm c \, a_{np} (np)^{-1} )^2 - ( \pm c \, a_{np} (np)^{-1} )^4 \\
\sim& c a_{np}^4 (np)^{-3} - c a_{np}^4 (np)^{-4} \sim c a_{np}^4 (np)^{-3},
\end{align*}
which is of lower order than in the case considered before. By symmetry, the third case, where $t-l= t'-l'$ but $t-k\neq t'-k'$, can be dealt with in exactly the same way. In all cases $t'$ can be assumed to be fixed, thus we can bound \eqref{sum cov} by
\[ c \frac{p^4}{a_{np}^4} \left(\sum_k |c_k|\right)^4 \sum_{t=1}^n a_{np}^4 (np)^{-2} = c \frac{p^2}{n} \to 0, \quad n\to\infty.  \]
This completes the proof in case $\alpha<2$.\\

If $\alpha>2$, the covariance in \eqref{sum cov} converges to a constant. If $\alpha=2$ with $EZ_{11}^2=\infty$, then it is a slowly varying function. In either case \eqref{sum cov} is of order
\[ O\left( \frac{p^4}{a_{np}^4} n \,s(np) \right) \leq O\left( n^{\beta(4-4/\alpha)} n^{1-4/\alpha} s(np) \right) \to 0, \]
since $\beta<\frac{4-\alpha}{4(\alpha-1)}$, where $s(\cdot)$ is some slowly varying function. For a more general result we distinguish the sub-cases $\alpha\in(2,3)$ and $\alpha\in[3,4)$ in the following.
\\

Let us now assume that $\alpha\in(2,3)$. By Markov's inequality applied to \eqref{mainterm} we have%the term in \eqref{mainterm} is bounded by
\begin{align}
p^2 & P\left(\left| \sum_{t=1}^n{X_{1t}^{L} X_{2t}^{L}} - n\mu_n \right| > \frac{a_{np}^2}{4p}\epsilon \right) \nonumber\\ \leq& \frac{64}{\epsilon^3}\frac{p^5}{a_{np}^6}\sum_{t_1,t_2,t_3=1}^n E\left( \prod_{i=1}^3 \left( X_{1,t_i}^L X_{2,t_i}^L - \mu_n \right) \right)  \nonumber\\ =& \frac{64}{\epsilon^3}\frac{p^5}{a_{np}^6}\sum_{t_1,t_2,t_3=1}^n \sum_{k_1,k_2,k_3}\sum_{l_1,l_2,l_3} \prod_{j=1}^3 (c_{k_j}c_{l_j}) E\left( \prod_{i=1}^3 \left( Z_{1,t_i-k_i}^L Z_{2,t_i-l_i}^L - \xi_n^2 \right) \right), \label{crazyexpectation}
\end{align}
where
\begin{align}\label{ndefxi}
\xi_n^2 = \frac{\mu_n}{\left(\sum_k c_k\right)^2} = (EZ_{11}^L)^2 = O\left(\frac{a_{np}^2}{(np)^2}\right).
\end{align}
To determine the order of the expectation in \eqref{crazyexpectation} we have to distinguish various cases. 
In the following we say that two index pairs $(a,b)$ and $(c,d)$ overlap if $a=c$ or $b=d$. 
If there exists a $j=1,2,3$ such that the index pair $(t_j-k_j,t_j-l_j)$ does not overlap with both the other two, then, due to independence, we are able to factor out the corresponding term and obtain
\[ E\left( \prod_{i=1}^3 \left( Z_{1,t_i-k_i}^L Z_{2,t_i-l_i}^L - \xi_n^2 \right) \right) = E\left( \prod_{i\neq j} \left( Z_{1,t_i-k_i}^L Z_{2,t_i-l_i}^L - \xi_n^2 \right) \right) E\left( Z_{1,t_j-k_j}^L Z_{2,t_j-l_j}^L - \xi_n^2 \right) = 0, \]
since $\xi_n^2=(EZ_{11}^L)^2 =E\left( Z_{1,t_j-k_j}^L Z_{2,t_j-l_j}^L \right)$. Thus, in any non-trivial case, each index pair does overlap with (at least) one of the other two. Therefore we have at least two equalities of the form $t_i-k_i = t_{(i+1)\mrm{mod}\, 3}-k_{(i+1)\mrm{mod}\, 3} \tn{ or } t_i-l_i = t_{(i+1)\mrm{mod}\, 3}-l_{(i+1)\mrm{mod}\, 3} \tn{ for } i=1,2,3$. Hence $t_2$ and $t_3$ are immediately determined by some linear combination of $t=t_1$ and the $k_i's$ or $l_i's$. Therefore the triple sum $\sum_{t_1,t_2,t_3=1}^n$ is, if we only count terms where the covariance is non-zero, in fact a simple sum $\sum_{t=1}^n$ and so only has a contribution of order $n$.
Now we have to determine the order of the products $E\left( \prod_{i=1}^3 Z_{1,t_i-k_i}^L Z_{2,t_i-l_i}^L \right)$. If we only have a single power then, by \eqref{ndefxi}, this gives us
\[ E\left( Z_{1,t_i-k_i}^L Z_{2,t_i-l_i}^L \right) = \xi_n^2 = o(1). \]
Since $\alpha>2$, powers of order two converge to a constant,
\[ E\left( \left(Z_{1,t_i-k_i}^L Z_{2,t_i-l_i}^L\right)^2 \right) \to \var(Z_{11})^2. \]
An application of Karamata's theorem yields that
\[ E\left( \left(Z_{1,t_i-k_i}^L Z_{2,t_i-l_i}^L\right)^3 \right) \sim a_{np}^6 (np)^{-2}. \]
Using the above facts, it is easy to see that
\begin{align} E\left( \prod_{i=1}^3 Z_{1,t_i-k_i}^L Z_{2,t_i-l_i}^L \right) = O\left( a_{np}^6 (np)^{-2} \right). \label{prodexp}\end{align}
Thus we have, using \eqref{ndefxi} and \eqref{prodexp}, for the expectation in \eqref{crazyexpectation} that
\begin{align*}
E\left( \prod_{i=1}^3 \left( Z_{1,t_i-k_i}^L Z_{2,t_i-l_i}^L - \xi_n^2 \right) \right) =& \sum_{k=0}^3 (-1)^{k} \sum_{J\subseteq\{1,2,3\},|J|=k}  E\left( \prod_{i\in\{1,2,3\}\backslash J}  Z_{1,t_i-k_i}^L Z_{2,t_i-l_i}^L \right) \xi_n^{2|J|} \\
=& O\left( a_{np}^6 (np)^{-2} - \frac{a_{np}^2}{(np)^2} -  \frac{a_{np}^6}{(np)^6} \right) \\
=& O\left( a_{np}^6 (np)^{-2} \right).
\end{align*}
The last calculation shows that the expectation in \eqref{crazyexpectation} is equal to $E\left( \prod_{i=1}^3  Z_{1,t_i-k_i}^L Z_{2,t_i-l_i}^L \right)$ plus lower order terms, and that the leading term is of order $a_{np}^6 (np)^{-2}$. With this observation we can finally conclude for \eqref{crazyexpectation} that
\begin{align*}
&\frac{64}{\epsilon^3}\frac{p^5}{a_{np}^6}\sum_{t_1,t_2,t_3=1}^n \sum_{k_1,k_2,k_3}\sum_{l_1,l_2,l_3} \prod_{j=1}^3 (c_{k_j}c_{l_j}) E\left( \prod_{i=1}^3 \left( Z_{1,t_i-k_i}^L Z_{2,t_i-l_i}^L - \xi_n^2 \right) \right) \\
&= O\left( \frac{64}{\epsilon^3}\frac{p^5}{a_{np}^6} n a_{np}^6 (np)^{-2} \right) 
= \frac{64}{\epsilon^3} O\left(\frac{p^3}{n}\right)\to 0, 
\end{align*}
which goes to zero by assumption. This completes the proof for $\alpha\in[2,3)$.\\

The method to deal with $\alpha\in[3,4)$ is similar to the one before and thus only described briefly. We use Markov's inequality with power four to obtain that the term in \eqref{mainterm} is bounded by
\begin{align}
\frac{256}{\epsilon^4}&\frac{p^6}{a_{np}^8}\sum_{t_1,t_2,t_3,t_4=1}^n \sum_{k_1,k_2,k_3,k_4}\sum_{l_1,l_2,l_3,l_4} \prod_{j=1}^4 (c_{k_j}c_{l_j}) E\left( \prod_{i=1}^4 \left( Z_{1,t_i-k_i}^L Z_{2,t_i-l_i}^L - \xi_n^2 \right) \right). \label{crazyexpectation2}
\end{align}
Observe that the expectation in \eqref{crazyexpectation2} is only non-zero if either
\begin{enumerate}
\item all index pairs $\{(t_i-k_i,t_i-l_i)\}_{i=1,2,3,4}$ overlap, or
\item there exist exactly two sets of overlapping index pairs, such that no index pair from one set overlaps with an index pair from the other set. We call these two sets disjoint.
\end{enumerate}
Case (i) is similar to the previous case, so that one can see that
\[ E\left( \prod_{i=1}^4 \left( Z_{1,t_i-k_i}^L Z_{2,t_i-l_i}^L - \xi_n^2 \right) \right) = O\left( (E((Z_{11}^L)^4))^2 \right) = O\left( \frac{a_{np}^8}{(np)^2} \right),\]
and that the contribution of $\sum_{t_1,t_2,t_3,t_4=1}^n$ is of order $n$. Therefore, in this case, the term in \eqref{crazyexpectation2} is of the order
\[ \frac{256}{\epsilon^4} \frac{p^6}{a_{np}^8} O\left( n \frac{a_{np}^8}{(np)^2} \right) = \frac{256}{\epsilon^4} O\left( \frac{p^4}{n} \right)\to 0. \]
Thus, we only have to determine the contribution in case (ii). Since the two sets of overlapping index pairs are disjoint, we obtain that 
\[ E\left( \prod_{i=1}^4 \left( Z_{1,t_i-k_i}^L Z_{2,t_i-l_i}^L - \xi_n^2 \right) \right) =  E\left( \left( Z_{11}^L Z_{21}^L - \xi_n^2 \right)^2 \right)^2. \]
Since $\alpha>2$ this converges to a constant. In contrast to case (i), the contribution of $\sum_{t_1,t_2,t_3,t_4=1}^n$ is of order $n^2$. This is due to the fact that the two sets of overlapping index pairs are disjoint, hence only two out of the four indices $t_1,\ldots,t_4$ are given by linear combinations of the other two and the $k's$ and $l's$. Therefore \eqref{crazyexpectation2} is of the order
\[ \frac{256}{\epsilon^4} \frac{p^6}{a_{np}^8} O\left( n^2  \right)\to 0. \]
The convergence to zero is justified by
\[  \frac{p^6}{a_{np}^8} n^2 = n^{2-8/\alpha} p^{6-8/\alpha} L(np)^{-8} \leq O\left( n^{2-8/\alpha+\beta(6-8/\alpha)} L(n^{\beta+1})^{-8} \right) \to 0,\]
since $\beta<\frac{4-\alpha}{3\alpha-4}$. This completes the proof of \cref{th conv op norm 2}.
\end{proof}

%\subsection{Extremes of $\diag(XX^T)$}\label{extremes diagonal}
\subsection{Extremes on the diagonal}\label{extremes diagonal}

In this section we analyze the extremes of the diagonal entries of $XX^T$, which are partial sums of squares of linear processes. To this end, we start with two auxiliary results. While \cref{lemma pull c sum out} is only valid for $\alpha<2$, \cref{lemma pp conv} covers the case where $2\leq\alpha<4$. Subsequently, these two lemmas help us to establish a general limit theorem for the diagonal entries of $XX^T$ for $0<\alpha<4$ in \cref{pp conv inf order ma}, the major result of this section.

\begin{lemma}\label{lemma pull c sum out}
Let $(Z_t)$ be an iid sequence such that $nP(|Z_1|>a_n x)\to x^{-\alpha}$ with $\alpha\in(0,2)$. For any sequence $(c_j)$ satisfying \eqref{summability c_j} we have, if $p$ and $n$ go to infinity, that
\[ pP\left( \sum_{t=1}^n \sum_{j=-\infty}^\infty c_j^2 Z_{t-j}^2 > a_{np}^2 x \right) \to \left(\sum_{j=-\infty}^\infty c_j^2 \right)^{\frac{\alpha}{2}} x^{-\alpha/2} \]
\end{lemma}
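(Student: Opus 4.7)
My plan is to reorganise the double sum by Tonelli, truncate the outer sum in $j$ at a finite level $K$, and treat the main term and remainder separately, letting $K\to\infty$ at the end. Set $W:=\sum_{j\in\Z} c_j^2$ and write $V_n^{(j)}:=\sum_{t=1}^n Z_{t-j}^2$, so that, since all summands are non-negative,
\[
S_n \;:=\; \sum_{t=1}^n\sum_{j\in\Z} c_j^2 Z_{t-j}^2 \;=\; \sum_{j\in\Z} c_j^2 V_n^{(j)}.
\]
For each $K\in\N$ I split $S_n = S_n^{(K)} + R_n^{(K)}$ with $S_n^{(K)} := \sum_{|j|\le K} c_j^2 V_n^{(j)}$, and the target is to show that $p\,P(S_n^{(K)} > a_{np}^2 x) \to C_K^{\alpha} x^{-\alpha/2}$, where $C_K^2 := \sum_{|j|\le K} c_j^2$, while $\limsup_n p\,P(R_n^{(K)} > a_{np}^2 \epsilon) \to 0$ as $K\to\infty$. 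Since $C_K^{\alpha} \to W^{\alpha/2}$, a $\liminf/\limsup$ sandwich then produces the claimed constant.

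For the main term, interchanging the order in $S_n^{(K)}$ gives $S_n^{(K)} = \sum_{s\in\Z} w_s^{(K)} Z_s^2$ with $w_s^{(K)} := \sum_{|j|\le K,\, 1\le s+j\le n} c_j^2$; the key observation is that $w_s^{(K)} = C_K^2$ for every interior index $s\in[K+1,n-K]$, whence $S_n^{(K)} = C_K^2 \sum_{s=K+1}^{n-K} Z_s^2 + B_n^{(K)}$, where the boundary term $B_n^{(K)}$ has at most $4K$ summands, each of the form $w_s^{(K)} Z_s^2$ with $w_s^{(K)} \le C_K^2$. Applying \cref{large deviation} to the iid sequence $Y_t = Z_t^2$ (tail index $\alpha/2\in(0,1)$, so the extra hypothesis is void) with $b_m = a_m^2$, $x_n = x a_{np}^2/a_n^2$ and $y_n = 0$, and using the regular variation relation $np\,P(Z_1^2>a_{np}^2 x) \to x^{-\alpha/2}$, yields $p\,P(\sum_{s=1}^{n-2K} Z_s^2 > a_{np}^2 y) \to y^{-\alpha/2}$ for every $y>0$; consequently $p\,P(C_K^2 \sum_{s=K+1}^{n-K} Z_s^2 > a_{np}^2 x) \to C_K^{\alpha} x^{-\alpha/2}$. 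A crude union bound over the $O(K)$ boundary summands then shows $p\,P(B_n^{(K)} > a_{np}^2\epsilon) = O(1/n)\to 0$ for every fixed $K$.

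The delicate step is the uniform control (in $n$) of $R_n^{(K)}$: Markov with exponent $1$ fails because $EZ_1^2 = \infty$, and Markov with exponent $\gamma<\alpha/2$ fails because $pn\,a_{np}^{-2\gamma}\to\infty$. I resolve this by truncating each entry at scale $a_{np}$: set $Z_s^L := Z_s\1_{\{|Z_s|\le a_{np}\}}$ and $Z_s^U := Z_s - Z_s^L$, which splits $R_n^{(K)} = R_n^{(K),L} + R_n^{(K),U}$. Karamata's theorem (valid because $\alpha<2$) gives $E(Z_1^L)^2 \sim c\,a_{np}^2/(np)$, so Markov with exponent $1$ yields $p\,P(R_n^{(K),L} > a_{np}^2\epsilon/2) \le 2cW_K'/\epsilon$ with $W_K' := \sum_{|j|>K} c_j^2 \to 0$. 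For $R_n^{(K),U}$ I pick $\gamma\in[\delta/2,\alpha/2)$ (nonempty since $\delta<\alpha$), apply the inequality $(\sum a_i)^{\gamma}\le\sum a_i^{\gamma}$ twice, and use the complementary Karamata estimate $E[(Z_1^U)^{2\gamma}]\sim c\,a_{np}^{2\gamma}/(np)$ to obtain $p\,P(R_n^{(K),U}>a_{np}^2\epsilon/2)\le c\,\epsilon^{-\gamma}\sum_{|j|>K}|c_j|^{2\gamma}$; since $c_j\to 0$ and $2\gamma\ge\delta$, $|c_j|^{2\gamma}$ is eventually dominated by $|c_j|^\delta$, so the right-hand side vanishes as $K\to\infty$. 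Finally, $\liminf_n p\,P(S_n>a_{np}^2 x) \ge C_K^{\alpha} x^{-\alpha/2}\to W^{\alpha/2}x^{-\alpha/2}$ follows from $S_n\ge C_K^2\sum_{s=K+1}^{n-K}Z_s^2$, and a matching upper bound comes from the inclusion $\{S_n>a_{np}^2 x\}\subseteq \{C_K^2\sum Z_s^2 > a_{np}^2 x(1-2\eta)\}\cup\{B_n^{(K)}>a_{np}^2 x\eta\}\cup\{R_n^{(K)}>a_{np}^2 x\eta\}$ upon sending $n\to\infty$, then $K\to\infty$, and finally $\eta\to 0$.
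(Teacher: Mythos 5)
Your proof is correct, and its overall architecture coincides with the paper's: truncate the coefficient sum at a finite range, reduce the finite-range term to the iid partial-sum large deviation from \cref{large deviation} (the boundary mismatch between $\sum_{|j|\le K}c_j^2\sum_t Z_t^2$ and $\sum_t\sum_{|j|\le K}c_j^2Z_{t-j}^2$ being $O(K)$ terms whose contribution, even after multiplication by $p$, is $O(1/n)$), get the $\liminf$ for free from positivity, and close the sandwich with a bound on the tail remainder that is uniform in $n$ and summable in $j$. The one place where you genuinely diverge is that remainder bound, which is the crux of the lemma. The paper truncates each summand $c_j^2Z_{t-j}^2$ at the exceedance level $a_{np}^2x$ itself, applies Markov with exponent $1$ to the truncated sum, and then uses Potter's bound to compare the $j$-dependent quantities $pnP(c_j^2Z_1^2>a_{np}^2x)$ and $c_j^2\frac{pn}{a_{np}^2}E(Z_1^2\1_{\{c_j^2Z_1^2\le a_{np}^2x\}})$ with the $c_j=1$ case, producing a per-coefficient bound $C|c_j|^\delta x^{-\alpha/2}$. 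You instead truncate the noise at the fixed level $a_{np}$ and use two different Markov exponents: exponent $1$ for the light part (where Karamata gives $E(Z^L)^2\sim ca_{np}^2/(np)$) and exponent $\gamma\in[\delta/2,\alpha/2)$ together with the subadditivity of $u\mapsto u^\gamma$ for the heavy part (where $E[(Z^U)^{2\gamma}]\sim ca_{np}^{2\gamma}/(np)$). Both routes land on a bound of the form $c(\epsilon)\sum_{|j|>K}|c_j|^{\delta}$ (up to replacing $\delta$ by $2\gamma\ge\delta$ or by $2$), and both rely on exactly the hypothesis \eqref{summability c_j}. Your version avoids Potter's bound at the price of an extra truncation and a second Markov exponent; the paper's version handles all $j$ in one stroke but needs the uniformity in $j$ supplied by Potter. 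Neither has a gap; the choice of $\gamma$ with $\delta\le 2\gamma<\alpha$ is exactly what makes both Karamata (needing $2\gamma<\alpha$) and the summability (needing $2\gamma\ge\delta$, since $|c_j|\le 1$ eventually) applicable.
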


\begin{proof}
Fix some $x>0$. Observe that \cref{large deviation} and \eqref{a_np} imply for $n\to\infty$ that $pP(\sum_{t=1}^n Z_t^2>a_{np}^2x)\to{}x^{-\alpha/2}$.
 We begin by showing the claim for a linear process of finite order. For any $\eta>0$ we have
\begin{align*}
P\left( \left| \sum_{j=-m}^m c_j^2  \sum_{t=1}^n Z_t^2 - \sum_{t=1}^n \sum_{j=-m}^m c_j^2 Z_{t-j}^2  \right| > a_{np}^2 \eta \right) \leq& P\left(  \sum_{j=-m}^m c_j^2 \sum_{t=1-j}^{j} Z_t^2  > a_{np}^2 \eta \right) \ston{} 0.
\end{align*}
Consequently,
\begin{align}
\lim_{n\to\infty} pP\left( \sum_{t=1}^n \sum_{j=-m}^m c_j^2 Z_{t-j}^2 > a_{np}^2 x \right) = x^{-\alpha/2} \left(\sum_{j=-m}^m c_j^2\right)^{\frac{\alpha}{2}}.
\label{claim for finite ma}
\end{align}
This and the positivity of the summands implies 
\begin{align}
\liminf_{n\to\infty} pP\left( \sum_{t=1}^n \sum_{j=-\infty}^\infty c_j^2 Z_{t-j}^2 > a_{np}^2 x \right) \geq x^{-\alpha/2} \left(\sum_{j=-\infty}^\infty c_j^2 \right)^{\frac{\alpha}{2}}.
\label{liminf lin process}
\end{align}
Thus it is only left to show that the limsup is bounded by the right hand side of \eqref{liminf lin process}. 
Using Markov's inequality yields
\begin{align*}
p P\left( \sum_{t=1}^n \sum_{j=-\infty}^\infty c_j^2 Z_{t-j}^2 > a_{np}^2 x \right) \leq \sum_{j=-\infty}^\infty pn P\left( c_j^2 Z_{1}^2 > {a_{np}^2 x} \right) + \sum_{j=-\infty}^\infty c_j^2 \frac{pn}{a_{np}^2 x} E\left( Z_{1}^2 \1_{\{ c_j^2 Z_{1}^2 \leq a_{np}^2 x \}} \right).
\end{align*}
Since $E\left( Z_{1}^2 \1_{\{ Z_{1}^2 \leq \cdot\}} \right)$ is a regularly varying function with index $\alpha/2-1$ we obtain, by Potter's bound, Karamata's Theorem and \eqref{summability c_j}, that, for some constant $C_1>0$,
\begin{align*}
 c_j^2 \frac{pn}{a_{np}^2 x} E\left( Z_{1}^2 \1_{\{ c_j^2 Z_{1}^2 \leq a_{np}^2 x\}} \right) =&  \frac{c_j^2}{x} \frac{E\left( Z_{1}^2 \1_{\{ c_j^2 Z_{1}^2 \leq a_{np}^2 x\}} \right)}{E\left( Z_{1}^2 \1_{\{ Z_{1}^2 \leq a_{np}^2 x\}} \right)} \frac{pn}{a_{np}^2} E\left( Z_{1}^2 \1_{\{ Z_{1}^2 \leq a_{np}^2 x\}} \right) \\
 \leq& C_1 \frac{c_j^2}{x} \left(c_j^{-2}\right)^{1-\alpha/2+(\alpha/2-\delta/2)} x^{1-\alpha/2} = C_1 x^{-\alpha/2} |c_j|^\delta.
\end{align*}
Likewise, $pnP(a_{np}^{-2} Z_1^2> \cdot)$ is a regularly varying function with index $\alpha/2$, thus we obtain, by the same arguments as before, that
\begin{align*}
 pn P\left( c_j^2 Z_{1}^2 > {a_{np}^2 x} \right)\leq C_2 x^{-\alpha/2} |c_j|^\delta.
\end{align*}
With $C=C_1+C_2$ this therefore implies
\begin{align}
\limsup_{n\to\infty} p P\left( \sum_{t=1}^n \sum_{j=-\infty}^\infty c_j^2 Z_{t-j}^2 > a_{np}^2 x \right) \leq C \sum_{j=-\infty}^\infty |c_j|^\delta x^{-\alpha/2}.
\label{eq almost there}
\end{align}
Hence, by \eqref{claim for finite ma} and \eqref{eq almost there}, we finally have, for some $\epsilon\in(0,1)$, that
\begin{align}
& \limsup_{n\to\infty} pP\left( \sum_{t=1}^n \sum_{j=-\infty}^\infty c_j^2 Z_{t-j}^2 > a_{np}^2 x \right) \leq \limsup_{n\to\infty} pP\left( \sum_{t=1}^n \sum_{j=-m}^m c_j^2 Z_{t-j}^2 > (1-2\epsilon)a_{np}^2 x \right) \nonumber\\
&+ \limsup_{n\to\infty} pP\left( \sum_{t=1}^n \sum_{j=m+1}^\infty c_j^2 Z_{t-j}^2 > \epsilon a_{np}^2 x \right) + \limsup_{n\to\infty} pP\left( \sum_{t=1}^n \sum_{j=-\infty}^{-m-1} c_j^2 Z_{t-j}^2 > \epsilon a_{np}^2 x \right) \nonumber\\
& \leq x^{-\alpha/2} \left( (1-2\epsilon)^{-\alpha/2} \left(\sum_{j=-m}^m c_j^2 \right)^{\frac{\alpha}{2}} + C \epsilon^{-\alpha/2}  \sum_{j=m+1}^\infty |c_j|^\delta  + C \epsilon^{-\alpha/2} \sum_{j=-\infty}^{-m-1}|c_j|^\delta \right). \label{someline}
\end{align}
Assumption \eqref{summability c_j} shows that the last two terms in \eqref{someline} vanish for $m\to\infty$. Letting $\epsilon\to 0$ thereafter completes the proof.
\end{proof}

For $2\leq\alpha<4$ and $m$-dependence, we state the upcoming lemma.

%First we show the Point process convergence of the entries of $D_S$ in the case where the observations are $m$-dependent.

\begin{lemma}\label{lemma pp conv} % [Finite moving-average]
Assume that there exists an $m\in\N$ such that $c_j=0$ if $|j|>m$. Then we have, for $2\leq\alpha<4$ and $p,n$ going to infinity, that 
\begin{align}
 \sum_{i=1}^p \eps_{a_{np}^{-2}\left(\sum_{t=1}^n X_{it}^2-n\mu_{X,\alpha}\right)} \to \sum_{i=1}^\infty \eps_{\Gamma_i^{-2/\alpha} \sum_{j=-m}^m c_j^2}.
\end{align} 
%where $\mu_X=EX_{11}^2=\sum_{j=-m}^m c_j^2\mu_Z=\sum_{j=-m}^m c_j^2EZ_{11}^2$.
\end{lemma}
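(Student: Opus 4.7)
The plan is to expand each diagonal element of $XX^\T$ into a main term that isolates the iid contribution and a remainder that is uniformly negligible on the scale $a_{np}^2$, and then to reduce the lemma to a point-process convergence for $Z_{it}^2$. Writing
\[
\sum_{t=1}^n X_{it}^2 \;=\; \sum_{j,k=-m}^m c_j c_k \sum_{t=1}^n Z_{i,t-j} Z_{i,t-k} \;=\; D_i + 2C_i,
\]
with $D_i = \sum_{j=-m}^m c_j^2 \sum_{t=1}^n Z_{i,t-j}^2$ and $C_i = \sum_{-m\le j<k\le m} c_j c_k \sum_{t=1}^n Z_{i,t-j} Z_{i,t-k}$, each shifted sum $\sum_{t=1}^n Z_{i,t-j}^2$ differs from $\sum_{t=1}^n Z_{it}^2$ only by squared terms whose time indices lie in a fixed set $S$ of cardinality $O(m)$ independent of $n$. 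Thus $D_i = (\sum_j c_j^2)\sum_t Z_{it}^2 + R_i$ with $|R_i|\le C\max_{s\in S}Z_{is}^2$, and since the maximum of $p|S|$ iid squared entries is of order $a_{p|S|}^2 = o(a_{np}^2)$, this gives $\max_i|R_i|/a_{np}^2 \ston{P} 0$.

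For the cross term $C_i$, fix $j\neq k$ and put $Y_t = Z_{i,t-j}Z_{i,t-k}$. The assumption $EZ_{11}=0$ (active for $\alpha\in(5/3,4)$ and hence throughout $[2,4)$) gives $EY_t=0$, and a short case analysis of the four indices $\{t-j,t-k,t'-j,t'-k\}$ rules out all but the trivial coincidence, so $\cov(Y_t,Y_{t'})=0$ for $t\neq t'$ and $\var(\sum_t Y_t) \le nEY_1^2$. When $EZ_{11}^2<\infty$ (that is, $\alpha>2$, or $\alpha=2$ with finite variance), Chebyshev and a union bound give
\[
P\bigl(\max_i|C_i|>\epsilon a_{np}^2\bigr) \;\le\; \frac{c\,pn}{a_{np}^4\epsilon^2} \;=\; O\!\bigl((np)^{1-4/\alpha}L(np)^{-4}\bigr) \;\ston{}\; 0,
\]
which vanishes because $\alpha<4$. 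In the edge case $\alpha=2$ with $EZ_{11}^2=\infty$, I would first truncate each $Z_{it}$ at level $a_{np}$, rerun the Chebyshev argument on the truncated cross term (whose second moment is now only slowly varying), and handle the $O_P(1)$ rows that contain an entry exceeding $a_{np}$ by a separate point-process absorption argument.

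Setting $\mu_n = \mu_{X,\alpha}/\sum_j c_j^2$, the two bounds above combine to give
\[
\sum_{t=1}^n X_{it}^2 - n\mu_{X,\alpha} \;=\; \Bigl(\sum_j c_j^2\Bigr)\Bigl(\sum_{t=1}^n Z_{it}^2 - n\mu_n\Bigr) + o_P(a_{np}^2)
\]
uniformly in $i$. A uniform $o_P(a_{np}^2)$ shift does not alter the limiting point process, so the lemma reduces to
\[
\sum_{i=1}^p \eps_{a_{np}^{-2}(\sum_t Z_{it}^2 - n\mu_n)} \;\ston{D}\; \sum_{i=1}^\infty \eps_{\Gamma_i^{-2/\alpha}},
\]
for the iid nonnegative variables $Z_{it}^2$, which are regularly varying of tail index $\alpha/2\in[1,2)$. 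Using $npP(Z_{11}^2>a_{np}^2 x)\to x^{-\alpha/2}$ together with a Nagaev-type large-deviation estimate yields $pP(\sum_t Z_{1t}^2 - n\mu_n > a_{np}^2 x)\to x^{-\alpha/2}$ and the vanishing of the lower tail, after which Poisson convergence follows from the standard Laplace-functional computation for rowwise-iid triangular arrays of heavy-tailed variables (cf.\ Resnick \cite{Resnick2008}).

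The hardest part will be the boundary case $\alpha=2$ with $EZ_{11}^2=\infty$: there both $EY_1^2$ and $\mu_n$ are only slowly varying in $n$, so both the cross-term bound and the reduction to the iid centered sum require careful Karamata bookkeeping to ensure that $(\sum_j c_j^2)n\mu_n$ matches $n\mu_{X,\alpha}$ tightly enough for the residual bias of order $pn\mu_n^2/a_{np}^2$ to vanish.
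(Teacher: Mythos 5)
Your proposal is correct and follows essentially the same route as the paper: reduce the diagonal term to $(\sum_j c_j^2)(\sum_t Z_{it}^2-n\mu_Z)$ via negligible boundary terms, kill the cross terms with a Chebyshev bound of order $pn/a_{np}^4=O((np)^{1-4/\alpha}L(np)^{-4})\to 0$ using $EZ_{11}=0$, and invoke the Heyde/Nagaev large-deviation result $pP(\sum_t Z_{1t}^2-n\mu_Z>a_{np}^2x)\to x^{-\alpha/2}$ for the iid core. The only cosmetic difference is that you verify directly that the cross-product covariances vanish, whereas the paper partitions the time index into arithmetic progressions to obtain iid blocks; both yield the same variance bound, and both treatments defer the $\alpha=2$, infinite-variance case to a truncated-mean modification.
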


\begin{proof}
Note that we replace $\mu_{X,\alpha}$ by $\mu_X$ in the following to simplify the notation.
For any iid sequence $(Z_t)$ with tail index $2<\alpha<4$ we have that
\begin{align}
 p P\left(\sum_{t=1}^n Z_{t}^2-n\mu_Z > a_{np}^2 x\right) \to x^{-\alpha/2}
\label{large deviation iid}
\end{align}
where $\mu_Z=EZ_1^2$. Indeed, \cite{Heyde1968}, and in greater generality also \cite{Davis1995}, show that, for any $x>0$,
  \begin{align}\label{largedev_iid}
  \frac{P\left(\sum_{t=1}^n Z_t^2-n\mu_Z > a_{np}^2 x\right)}{nP\left(Z_1^2-\mu_Z > a_{np}^2 x \right)} \to  1.
 \end{align}
With $P\left(Z_1^2-\mu_Z > a_{np}^2 x \right)\sim P\left(Z_1^2 > a_{np}^2 x \right) \sim p^{-1} x^{-\alpha/2}$, the result follows. Note that \eqref{largedev_iid} also holds for $\alpha=2$ if $EZ_{11}^2<\infty$. In case $EZ_{11}^2=\infty$ (which can only happen if $\alpha=2$), one has to replace $\mu_Z$ by the sequence of truncated means $\mu_z^n=E(Z_{11}^2\1_{\{ Z_{11}^2 \leq a_{np}^2 \}})$. For notational simplicity, we exclude infinite variance case in the following. It is treated analogously to the finite variance case, except that everywhere $\mu_Z$ has to be replaced by $\mu_Z^n$, $\mu_X$ by $\mu_X^n=\sum_k c_k^2 \mu_Z^n$, and finally $\mu_{X,m}$ by  $\mu_{X,m}^n=\sum_{|k|\leq m} c_k^2 \mu_Z^n$.
By the stationarity of the Z's we have that
\begin{align*}
 P&\left(\left|\sum_{j=-m}^m c_j^2 \sum_{t=1}^n (Z_{1,t}^2-\mu_Z) - \sum_{t=1}^n \sum_{j=-m}^m c_j^2  (Z_{1,t-j}^2-\mu_Z) \right| > a_{np}^2 \eta \right) \\  &\leq P\left(\sum_{j=-m}^m c_j^2 \sum_{t=1-j}^j Z_{1,t}^2 > a_{np}^2 \eta \right)\to 0.
\end{align*}
Hence, using \eqref{large deviation iid}, this yields
\begin{align}\label{dep of Z on j}
 p P\left(\sum_{t=1}^n \sum_{j=-m}^m c_j^2 (Z_{1,t-j}^2-\mu_Z) > a_{np}^2 x\right) \to x^{-\alpha/2} \left|\sum_{j=-m}^m c_j^2\right|^{\alpha/2}.
\end{align}
This immediately implies that
\begin{align*}
 \sum_{i=1}^p \eps_{a_{np}^{-2}\left(\sum_{t=1}^n  \sum_{j=-m}^m c_j^2 (Z_{i,t-j}^2 - \mu_Z)\right)} \to \sum_{i=1}^\infty \eps_{\Gamma_i^{-2/\alpha} \left|\sum_{j=-m}^m c_j^2\right|^{\alpha/2}}.
\end{align*}
Thus it is only left to show that, for any continuous $f:\R_+\to\R_+$ with compact support,
\begin{align*}
 \lim_{n\to\infty}P\left(\sum_{i=1}^p\left|f\left(a_{np}^{-2}\Big(\sum_{t=1}^n X_{it}^2-n\mu_X\Big)\right)-f\left(a_{np}^{-2}\sum_{t=1}^n  \sum_{j=-m}^m c_j^2 (Z_{i,t-j}^2 - \mu_Z)\right)\right|>\eta\right)=0.
\end{align*}
For convenience, we define $f(x)=0$ if $x\leq 0$. Clearly, we have that
\begin{align*}
 \left| \sum_{t=1}^n X_{it}^2-n\mu_X - \sum_{t=1}^n  \sum_{j=-m}^m c_j^2 (Z_{i,t-j}^2 - \mu_Z) \right|\leq 2 \sum_{j=-m}^{m-1}\sum_{k=j+1}^m |c_j c_k| \left| \sum_{t=1}^n Z_{i,t-j} Z_{i,t-k} \right|.
\end{align*}
%Hence, by the arguments of the proof of \cref{pp conv inf order ma}, it suffices to show that
Hence, it suffices to show that
\begin{align*}
 a_{np}^{-2} \max_{1\leq i\leq p}\left| \sum_{t\in J_s} Z_{i,t-j} Z_{i,t-k} \right|\to 0
\end{align*}
for each fixed $j\in\{-m,\ldots,m-1 \}$, $k\in\{j+1,\ldots,m \}$ and $s\in\{0,\ldots,k-j\}$, where $J_s\coloneqq s+(k-j+1)\N_0$. Note that $(Z_{i,t-j} Z_{i,t-k})_{t\in J_s}$ is a sequence of iid random variables with mean zero. Therefore we have, by Markov's inequality,
\begin{align*}
 P\left( \max_{1\leq i\leq p}\left| \sum_{t\in J_s} Z_{i,t-j} Z_{i,t-k} \right| > a_{np}^2 \eta \right) \leq& p P\left( \left| \sum_{t\in J_s} Z_{1,t-j} Z_{1,t-k} \right| > a_{np}^2 \eta \right) \\ \leq& \frac{p}{\eta^2 a_{np}^4} \sum_{t\in J_s} \var(Z_{1,t-j} Z_{1,t-k}) \\ \leq& \frac{pn}{\eta^2 a_{np}^4} (EZ_{11}^2)^2 \\ =& O\left( \frac{pn}{a_{np}^4} \right) = O\left( (pn)^{1-4/\alpha} L(pn)^{-4} \right) \to 0 %= O_l\left( (pn)^{1-4/\alpha} \right) \to 0
\end{align*}
since $\alpha<4$.
\end{proof}

Now we prove the major result of this section, that is, the point process convergence of the diagonal elements of the sample covariance $XX^\T$ (or its centered version). This indirectly characterizes the extremal behavior of the $k$-largest diagonal entries of $XX^\T$. Note that \cref{pp conv inf order ma} holds for any $0<\beta<\infty$ in \eqref{beta con} independently of $\alpha\in(0,4)$.

%By virtue of \cref{lemma pull c sum out} and \cref{lemma pp conv} we obtain the point process convergence of the diagonal elements of the sample covariance matrix $XX^\T$. This immediately characterizes their extremal behavior. Note that this result holds without any restriction on $\beta$ even if $\alpha\geq 1$.

%\begin{proposition}\label{pp conv inf order ma}
%Let $X=(X_{it})$ be as in equations \eqref{Z reg var}, \eqref{def Xit} and \eqref{summability c_j} %\labelcref{Z reg var,def Xit,summability c_j} 
%with $\alpha\in(0,4)$, and suppose that \eqref{beta con} holds for some $\beta>0$. Then
%\begin{align}
%\sum_{i=1}^p  \eps_{a_{np}^{-2}\sum_{t=1}^n X_{it}^2} \ston{D} N = \sum_{i=1}^\infty \eps_{\Gamma_i^{-2/\alpha} \sum_{j=-\infty}^\infty c_j^2},
%\label{eq lin process conv}\end{align}
%with $(\Gamma_i)$ as in \eqref{N via Gamma}.
%\end{proposition}

\begin{proposition}\label{pp conv inf order ma}%{pp conv finite var} % [General case]
Let $0<\alpha<4$ and suppose that \eqref{beta con} holds for some $\beta>0$. Then we have that 
\begin{align}\label{eq lin process conv}
 \sum_{i=1}^p \eps_{a_{np}^{-2}\left(\sum_{t=1}^n X_{it}^2-n\mu_{X,\alpha}\right)} \to \sum_{i=1}^\infty \eps_{\Gamma_i^{-2/\alpha} \sum_{j=-\infty}^\infty c_j^2}
\end{align}
with $\mu_{X,\alpha}$ and $(\Gamma_i)$ as given in \eqref{mu X alpha} and \eqref{N via Gamma}, respectively.
\end{proposition}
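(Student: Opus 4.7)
My plan is to approximate the linear process by its finite-order truncation
\[
X_{it}^{(m)} = \sum_{|j|\leq m} c_j Z_{i,t-j},
\]
with associated centering $\mu_{X^{(m)},\alpha}$ defined analogously to \eqref{mu X alpha} but with coefficients $c_j\1_{|j|\leq m}$, and then pass to the limit $m\to\infty$. Writing $N_n$ for the left-hand side of \eqref{eq lin process conv}, $N_n^{(m)}$ for its truncated analogue, and $N$, $N^{(m)}$ for the corresponding Poisson limits, the proof hinges on three pieces: (i) for each fixed $m$, $N_n^{(m)}\to N^{(m)}$ in distribution; (ii) $N^{(m)}\to N$ vaguely; (iii) the truncation error $N_n-N_n^{(m)}$ is asymptotically negligible as first $n\to\infty$ and then $m\to\infty$. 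A standard triangular Laplace-functional argument then yields the claim.

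For step (i), when $\alpha\in[2,4)$ the statement is exactly \cref{lemma pp conv}. For $\alpha\in(0,2)$, I would apply \cref{lemma pull c sum out} with the truncated coefficients to obtain
\[
pP\Bigl(\sum_{t=1}^n \sum_{|j|\leq m} c_j^2 Z_{1,t-j}^2 > a_{np}^2 x\Bigr) \ston{} x^{-\alpha/2}\Bigl(\sum_{|j|\leq m} c_j^2\Bigr)^{\alpha/2},
\]
and, combined with the independence of rows and Resnick's Proposition 3.21, deduce point process convergence of $\sum_i \eps_{a_{np}^{-2}\sum_t\sum_{|j|\leq m} c_j^2 Z_{i,t-j}^2}$ to the correct limit. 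The cross terms $\sum_{\substack{j\neq k\\ |j|,|k|\leq m}} c_j c_k \sum_t Z_{i,t-j}Z_{i,t-k}$ appearing in $\sum_t (X_{it}^{(m)})^2$ are a finite linear combination of objects whose uniform-in-$i$ order is controlled via \cref{lemma non-diag elemements}, hence they are $o_P(a_{np}^2)$. Step (ii) is immediate from the representation \eqref{N via Gamma}, since $\sum_{|j|\leq m} c_j^2 \to \sum_j c_j^2$.

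The crucial and hardest step is (iii): showing
\[
\lim_{m\to\infty} \limsup_{n\to\infty} P\Bigl( a_{np}^{-2} \max_{1\leq i\leq p} \bigl| \textstyle\sum_t X_{it}^2 - \sum_t (X_{it}^{(m)})^2 - n(\mu_{X,\alpha}-\mu_{X^{(m)},\alpha}) \bigr| > \eta \Bigr) = 0.
\]
Expanding
\[
\sum_t X_{it}^2 - \sum_t(X_{it}^{(m)})^2 = 2\sum_t X_{it}^{(m)}\bigl(X_{it}-X_{it}^{(m)}\bigr) + \sum_t \bigl(X_{it}-X_{it}^{(m)}\bigr)^2,
\]
the remainder process $X_{it}-X_{it}^{(m)}$ is a linear process with coefficients $c_j\1_{|j|>m}$, and the tail of its $\ell^\delta$ norm vanishes by \eqref{summability c_j}. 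For $\alpha\in(0,2)$, \cref{lemma pull c sum out} applied to the truncated-away coefficients, together with a union bound over $p$ rows, dominates the tail of the second summand by $x^{-\alpha/2}\bigl(\sum_{|j|>m} c_j^2\bigr)^{\alpha/2}\to 0$. For $\alpha\in[2,4)$ I would use variance or higher-moment estimates on the centered remainder, as done in the proof of \cref{th conv op norm 2}, with $\sum_{|j|>m} c_j^2$ playing the role of the small parameter that drives the bound to zero. The mixed term $\sum_t X_{it}^{(m)}(X_{it}-X_{it}^{(m)})$ is handled similarly, using Cauchy--Schwarz or analogous moment bounds to reduce to the same two quadratic quantities.

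The main obstacle is the uniform-in-$i$ control of the mixed and remainder terms in step (iii) when $\alpha\in[2,4)$: here the centering $\mu_{X,\alpha}$ itself must be tracked as $m\to\infty$, and the relevant variance or higher-moment calculation is an $m$-indexed refinement of the delicate bookkeeping in \cref{th conv op norm 2}. Once this uniform estimate is in place, assembling the Laplace-functional convergence
\[
\bigl| Ee^{-N_n(f)}-Ee^{-N(f)} \bigr| \leq \bigl| Ee^{-N_n(f)}-Ee^{-N_n^{(m)}(f)}\bigr| + \bigl| Ee^{-N_n^{(m)}(f)}-Ee^{-N^{(m)}(f)}\bigr| + \bigl| Ee^{-N^{(m)}(f)}-Ee^{-N(f)} \bigr|,
\]
taking $n\to\infty$ and then $m\to\infty$, closes the argument.
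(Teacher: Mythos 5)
Your proposal is correct and follows essentially the same route as the paper: prove the finite-order case via \cref{lemma pull c sum out} (resp.\ \cref{lemma pp conv} for $\alpha\in[2,4)$) with the within-row cross terms killed by the argument of \cref{lemma non-diag elemements}, then pass to $m\to\infty$ by controlling the centered truncation error (squared tail, mixed, and tail-tail cross terms) with Karamata/Chebyshev-type bounds and assembling the limits through a triangular Laplace-functional argument. The paper's decomposition of $X_{1t}^2-X_{1t,m}^2$ into three sums is algebraically identical to your $2\sum_t X^{(m)}(X-X^{(m)})+\sum_t(X-X^{(m)})^2$ split, and its treatment of the $\alpha\in[2,4)$ case is exactly the $m$-indexed variance bookkeeping you anticipate.
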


\begin{proof}
For notational simplicity we assume without loss of generality that $X_{it}=\sum_{j=0}^\infty c_j Z_{i,t-j}$, and write $\mu_X=\mu_{X,\alpha}$. The extension to the non-causal case is obvious.\\

We begin with the case of $0<\alpha<2$. First we prove the claim for finite linear processes $X_{it,m}=\sum_{j=0}^m c_j Z_{i,t-j}$.
From \cref{lemma pull c sum out} we already have that
\begin{align}
\sum_{i=1}^p  \eps_{a_{np}^{-2}\sum_{t=1}^n \sum_{j=0}^m c_j^2 Z_{i,t-j}^2}  \ston{D} \sum_{i=1}^\infty \eps_{\Gamma_i^{-2/\alpha} \sum_{j=0}^m c_j^2}.
\label{pp without cross products}\end{align}
Thus it is only left to show that all terms involving cross products are negligible. By \cite[Theorem 4.2]{Kallenberg1983} it suffices to show, for any $\eta>0$, that
\begin{align}
\lim_{n\to\infty} P\left( \sum_{i=1}^p \left| f\left(a_{np}^{-2}\sum_{t=1}^n X_{it,m}^2\right) - f\left(a_{np}^{-2}\sum_{t=1}^n\sum_{j=0}^m c_j^2 Z_{i,t-j}^2 \right) \right| > \eta \right) = 0
\label{conv xitm to sum of sq}\end{align}
for any continuous function $f:\R_+\to\R_+$ with compact support $\mathrm{supp}(f)\subset [c,\infty]$ and $c>0$. Choose some $0<\gamma<c$ and let $K=[c-\gamma,\infty]$. On the set
\[ A_n^\gamma = \left\{ \max_{1\leq i\leq p} \left|\sum_{t=1}^n X_{it,m}^2-\sum_{t=1}^n\sum_{j=0}^m c_j^2 Z_{i,t-j}^2\right| \leq a_{np}^2 \gamma \right\} \]
the following is true: if $a_{np}^{-2}\sum_{t=1}^n\sum_{j=0}^m c_j^2 Z_{i,t-j}^2\notin K$, then the absolute difference in \eqref{conv xitm to sum of sq} is zero, else it is bounded by the modulus of continuity $\omega(\gamma)=\sup\{|f(x)-f(y)|: |x-y|\leq\gamma\}$.
Hence, the probability in \eqref{conv xitm to sum of sq} is bounded by
\begin{align*}
P\left( \omega(\gamma) \sum_{i=1}^p \eps_{a_{np}^{-2}\sum_{t=1}^n\sum_{j=0}^m c_j^2 Z_{i,t-j}^2}(K) > \eta \right) + P\left(\left(A_n^\gamma\right)^c\right).
\end{align*}
By \eqref{pp without cross products}, the first summand converges to
\[ P\left( \omega(\gamma) \sum_{i=1}^\infty \eps_{\sum_{j=0}^m c_j^2 \Gamma_i^{-2/\alpha}}(K) > \eta \right). \]
Since $\sum_{i=1}^\infty \eps_{\sum_{j=0}^m c_j^2 \Gamma_i^{-2/\alpha}}(K)<\infty$ and $\omega(\gamma)\to 0$ as $\gamma\to 0$, this probability approaches zero as $\gamma$ tends to zero. To show that
\begin{align}
P\left(\left(A_n^\gamma\right)^c\right) \leq P\left( 2 \sum_{j=0}^{m-1} \sum_{k=j+1}^m |c_j c_k | \max_{i=1:p} \sum_{t=1}^n |Z_{i,t-j} Z_{i,t-k}| > a_{np}^2 \gamma \right) \ston{} 0
\label{An complement}\end{align}
we use the following observation for fixed $j\in\{0,\ldots,m-1\}$ and $k\in\{j+1,\ldots,m\}$: the product $Z_{i,t-j} Z_{i,t-k}$ has, because of independence, tail index $\alpha$, and   $Z_{i,t-j} Z_{i,t-k}$ and $Z_{i,s-j} Z_{i,s-k}$ are independent if and only if $|s-t|\neq k-j$. Thus, we partition the natural numbers $\N$ into $k-j+1$ pairwise disjoint sets $s+(k-j+1)\N_0$, $s\in\{0,\ldots,k-j\}$. Then we have, by \cref{lemma non-diag elemements} and the independence of the summands, that 
\[ a_{np}^{-2} \max_{1\leq i\leq p} \sum_{t\in s+(k-j+1)\N_0} |Z_{i,t-j} Z_{i,t-k}| \ston{P} 0, \]
for each $s\in\{0,\ldots,k-j\}$. Since $j,k$ only vary over finite sets this implies \eqref{An complement}.
Therefore we have shown \eqref{eq lin process conv} for a finite order moving average $X_{it,m}$. % in place of the infinite order moving average $X_{it}$.                                              

Now we let $m$ go to infinity. Clearly, we have that
\begin{align}
\sum_{i=1}^\infty \eps_{\Gamma_i^{-2/\alpha} \sum_{j=0}^m c_j^2} \sto{D}{m\to\infty} \sum_{i=1}^\infty \eps_{\Gamma_i^{-2/\alpha} \sum_{j=0}^\infty c_j^2}.
\label{pp conv linear proc}
\end{align}
Thus, by \cite[Theorem 3.2]{Billingsley1999}, it is only left to show that
\begin{align} 
\lim_{m\to\infty} \limsup_{n\to\infty} P\left( \sum_{i=1}^p \left| f\left(a_{np}^{-2}\sum_{t=1}^n X_{it}^2\right) - f\left(a_{np}^{-2}\sum_{t=1}^n X_{it,m}^2\right) \right| > \eta \right) = 0.
\label{eq vague conv f}\end{align}
By repeating the previous arguments, it suffices to show
\[ \limsup_{n\to\infty} P\left( a_{np}^{-2} \max_{1\leq i\leq p} \sum_{t=1}^n |X_{it}^2-X_{it,m}^2| > \gamma\right) \leq \limsup_{n\to\infty} pP\left( a_{np}^{-2} \sum_{t=1}^n |X_{1t}^2-X_{1t,m}^2| > \gamma\right) \to 0, \]
as $m\to\infty$. Clearly, we have that
\begin{align}
X_{1t}^2-X_{1t,m}^2 &= \sum_{j=m+1}^\infty c_j^2 Z_{1,t-j}^2 + 2 \sum_{j=m+1}^\infty \sum_{k=0}^m c_j c_k Z_{1,t-j} Z_{1,t-k} + \sum_{j=m+1}^\infty  \sum_{\substack{k=m+1\\k\neq j}}^\infty c_j c_k Z_{1,t-j} Z_{1,t-k}.
\label{rest}\end{align}
For the first summand on the right hand side of equation \eqref{rest} we have, by \cref{lemma pull c sum out}, that
\begin{align*}
p P\left( \sum_{t=1}^n \sum_{j=m+1}^\infty c_j^2 Z_{1,t-j}^2 > \eta a_{np}^2 \right) \ston{} \left(\sum_{j=m+1}^\infty c_j^2\right)^{\alpha/2} \eta^{-\alpha/2} \sto{}{m\to\infty} 0.
\end{align*}
Using \cref{lemma pull c sum out} and the elementary inequality $2|ab|\leq a^2+b^2$, we obtain for the second term in equation \eqref{rest} that
\begin{align*}
p & P\left( 2 \sum_{t=1}^n \sum_{j=m+1}^\infty \sum_{k=0}^m |c_j c_k Z_{1,t-j} Z_{1,t-k}| > \eta a_{np}^2 \right) \leq p P\left( \sum_{t=1}^n \sum_{j=m+1}^\infty \sum_{k=0}^m |c_j c_k| Z_{1,t-j}^2 > \frac{\eta}{2} a_{np}^2 \right) \\
&+ p P\left( \sum_{t=1}^n \sum_{j=m+1}^\infty \sum_{k=0}^m |c_j c_k| Z_{1,t-k}^2 > \frac{\eta}{2} a_{np}^2 \right) \sim 2\frac{\eta}{4}^{-\alpha/2} \left(\sum_{k=0}^m |c_k|\right)^{\alpha/2} \left(\sum_{j=m+1}^\infty |c_j|\right)^{\alpha/2},
\end{align*}
and since $\sum_{j=0}^\infty |c_j|<\infty$, this term converges to zero as $m\to\infty$. The third term in equation \eqref{rest} can be handled similarly. Thus the proof is complete for $0<\alpha<2$.\\

For $2\leq\alpha<4$, \cref{lemma pp conv} gives us the result for a finite moving average. Thus it is only left to show that
to show that
\[ \lim_{m\to\infty}\limsup_{n\to\infty}P\left( \sum_{i=1}^p \left| f\Big(a_{np}^{-2}\sum_{t=1}^n(X_{it}^2-\mu_X)\Big) - f\Big(a_{np}^{-2}\sum_{t=1}^n(X_{it,m}^2-\mu_{X,m})\Big) \right| > \gamma \right)=0 \]
for any continuous $f$ with compact support and $\gamma>0$. By the arguments given before it suffices to show that
\begin{align*}
\lim_{m\to\infty}\limsup_{n\to\infty}pP\left( \left| \sum_{t=1}^n (X_{1t}^2-X_{1t,m}^2-(\mu_X-\mu_{X,m})) \right| > a_{np}^2\gamma\right)=0.
\end{align*}
Clearly, we have that
\begin{align*}
pP&\left( \left| \sum_{t=1}^n (X_{1t}^2-X_{1t,m}^2-(\mu_X-\mu_{X,m})) \right| > a_{np}^2\gamma\right) \\ 
\leq& pP\left( \left| \sum_{k=m+1}^\infty c_k^2 \sum_{t=1}^n (Z_{1,t-k}^2-\mu_Z) \right| > a_{np}^2\frac{\gamma}{3}\right) \\
&+ pP\left( 2 \left| \sum_{k=m+1}^\infty \sum_{l=0}^m c_k c_l \sum_{t=1}^n Z_{1,t-k}Z_{1,t-l} \right| > a_{np}^2\frac{\gamma}{3}\right) \\
&+ pP\left( 2 \left| \sum_{k=m+1}^\infty \sum_{l=k+1}^\infty c_k c_l \sum_{t=1}^n Z_{1,t-k}Z_{1,t-l} \right| > a_{np}^2\frac{\gamma}{3}\right) \\
&= \mrm{I} + \mrm{II} + \mrm{III}
\end{align*}
We will show in turn that $\mrm{I},\mrm{II},\mrm{III}\to 0$. We begin with $\mrm{I}$. Clearly, there either exist a $t$ and a $k$ such that $|c_k Z_{1,t-k}>a_{np}|$, or $|c_k Z_{1,t-k}\leq a_{np}|$ for all $t,k$. This simple fact and Chebyshev's inequality yield
\begin{align*}
\mrm{I} =& pP\left( \left| \sum_{k=m+1}^\infty c_k^2 \sum_{t=1}^n (Z_{1,t-k}^2-\mu_Z) \right| > a_{np}^2\frac{\gamma}{3}\right) \\
\leq& \sum_{k=m+1}^\infty pn P(|c_k Z_{1,1-k}>a_{np}|) + \frac{3}{\gamma} \frac{p}{a_{np}^4} \var\left(  \sum_{k=m+1}^\infty c_k^2 \sum_{t=1}^n Z_{1,t-k}^2 \1_{\{ |c_k Z_{1,t-k}|\leq a_{np} \}} \right) \\
&+ p \1_{\left\{ \sum_{k=m+1}^\infty c_k^2 n E\left(Z_{11}^2 \1_{\{ |c_k Z_{1,t-k}|> a_{np} \}}\right) > a_{np}^2\frac{\gamma}{3}  \right\}} = \mrm{I_1}+\mrm{I_2}+\mrm{I_3}
\end{align*}
For the first term we have by Karamata's theorem that 
\[ \lim_{m\to\infty}\limsup_{n\to\infty}\mrm{I_1}=\lim_{m\to\infty} \sum_{k=m+1}^\infty c_k^\alpha = 0.\]
Another application of Karamata's theorem shows that
\[ E\left(Z_{11}^2 \1_{\{ |c_k Z_{1,t-k}|> a_{np} \}}\right) \sim |c_k|^{\alpha/2-1} \frac{a_{np}^2}{np}, \]
therefore
\[ \lim_{n\to\infty}\frac{p \1_{\left\{ \sum_{k=m+1}^\infty c_k^{\alpha/2+1}  > p \frac{\gamma}{3}  \right\}}}{\mrm{I_3}}=1. \]
However, $p \1_{\left\{ \sum_{k=m+1}^\infty c_k^{\alpha/2+1}  > p \frac{\gamma}{3}  \right\}}=0$ for $n$ sufficiently large, since $p=p_n\to\infty$ and 
\[ \sum_{k=m+1}^\infty c_k^{\alpha/2+1}<\infty.\]
As a consequence, $\mrm{I_3}\to 0$. Regarding $\mrm{I_2}$, observe that the covariance in
\begin{align*}
\mrm{I_2}=\frac{3}{\gamma} \frac{p}{a_{np}^4} \sum_{k=m+1}^\infty \sum_{k'=m+1}^\infty c_k^2 c_{k'}^2 \sum_{t=1}^n \sum_{t'=1}^n \cov\left(   Z_{1,t-k}^2 \1_{\{ |c_k Z_{1,t-k}|\leq a_{np} \}},Z_{1,t'-k'}^2 \1_{\{ |c_k' Z_{1,t'-k'}|\leq a_{np} \}} \right)
\end{align*}
is zero if $t-k\neq t'-k'$. In the case of equality, $t-k= t'-k'$, we have that
\begin{align*} 
\sum_{t=1}^n &\sum_{t'=1}^n \cov\left(  Z_{1,t-k}^2 \1_{\{ |c_k Z_{1,t-k}|\leq a_{np} \}},Z_{1,t'-k'}^2 \1_{\{ |c_k' Z_{1,t'-k'}|\leq a_{np} \}} \right) \\
=& \sum_{t=1}^n  \var\left(  Z_{1,t-k}^2 \1_{\{ |c_k Z_{1,t-k}|\leq a_{np} \}} \1_{\{ |c_k' Z_{1,t'-k'}|\leq a_{np} \}} \right) \\
\leq& n E\left( Z_{1,1-k}^4 \1_{\{ |\min\{c_k,c_{k'}\} Z_{1,1-k}|\leq a_{np} \}}  \right)
\end{align*}
Using Karamata's theorem and Potter's bound we obtain that there exists a $C>0$ and an $\epsilon>0$ such that
\[ \frac{pn}{a_{np}^4} E\left( Z_{1,1-k}^4 \1_{\{ |\min\{c_k,c_{k'}\} Z_{1,1-k}|\leq a_{np} \}}  \right) \leq C \min\{c_k,c_{k'}\}^{\alpha/4-\epsilon-1}. \]
For $m$ sufficiently large the coefficients become smaller than one, thus
\[ \min\{c_k,c_{k'}\}^{\alpha/4-\epsilon-1} \leq c_k^{\alpha/4-\epsilon-1} c_{k'}^{\alpha/4-\epsilon-1}. \]
All in all we obtain
\[  \lim_{m\to\infty}\limsup_{n\to\infty}\mrm{I_2}\leq \frac{3C}{\gamma} \lim_{m\to\infty} \left( \sum_{k=m+1}^\infty c_k^{1+\alpha/4-\epsilon} \right)^2=0, \]
since $ \sum_{k=0}^\infty c_k<\infty$. For the second term observe that it follows, using Chebyshev's inequality, $EZ_{11}=0$ and the independence of the $Z$'s, that
\begin{align*}
\mrm{II} =& pP\left( 2 \left| \sum_{k=m+1}^\infty \sum_{l=0}^m c_k c_l \sum_{t=1}^n Z_{1,t-k}Z_{1,t-l} \right| > a_{np}^2\frac{\gamma}{3}\right) \\
\leq& \frac{6}{\gamma} \frac{p}{a_{np}^4}\var\left( \sum_{k=m+1}^\infty \sum_{l=0}^m c_k c_l \sum_{t=1}^n Z_{1,t-k}Z_{1,t-l} \right) \\
=& \frac{6}{\gamma} \frac{p}{a_{np}^4} \sum_{k,k'=m+1}^\infty \sum_{l,l'=0}^m c_k c_{k'} c_l c_{l'} \sum_{t,t'=1}^n E\left(  Z_{1,t-k}Z_{1,t-l}Z_{1,t'-k'}Z_{1,t'-l'} \right) \\
\leq& \frac{6}{\gamma} \frac{p}{a_{np}^4} \sum_{k,k'=m+1}^\infty \sum_{l,l'=0}^m c_k c_{k'} c_l c_{l'} n E\left(  Z_{11}^2 \right)^2 \\
\leq& O\left(  \left(\sum_{k=m+1}^\infty c_k\right)^2 \left(\sum_{l=0}^m c_l\right)^2 \frac{pn}{a_{np}^4} \right) \ston{} 0,
\end{align*}
since $2<\alpha<4$. The remaining term $\mrm{III}$ can be dealt with similarly to the previous term $\mrm{II}$. Hence the proof is complete.
\end{proof}

\subsection{Proofs of \cref{th1} and \cref{th1b}}\label{proof th1}

In this section we use the foregoing results from \cref{sec large dev}, \cref{opnorm} and \cref{extremes diagonal} to complete the proofs of \cref{th1}  and \cref{th1b}. 

\begin{proof}[Proof of \cref{th1}]
Denote by $S_k=(XX^\T)_{kk}=\sum_{t=1}^n X_{kt}^2$ the diagonal entries of $XX^\T$. % and by $S_{(1)}\geq\ldots\geq S_{(p)}$ their order statistics. 
Recall that $\lambda_{(1)}\geq\ldots\geq\lambda_{(p)}$ are the upper order statistics of the eigenvalues of $XX^\T-n\mu_{X,\alpha}I_p$ with $\mu_{X,\alpha}$ as given in \eqref{mu X alpha}. Similarly we denote by $S_{(1)}\geq\ldots\geq S_{(p)}$ the upper order statistics of $S_k-n\mu_{X,\alpha}=\sum_{t=1}^n X_{kt}^2-n\mu_{X,\alpha}$. Weyl's Inequality, cf. \cite[Corollary III.2.6]{Bhatia1997}, and \Cref{th conv op norm 2} imply that
\begin{align}
a_{np}^{-2} \max_{1\leq k\leq p}|\lambda_{(k)}-S_{(k)}|= a_{np}^{-2} \max_{1\leq k\leq p}|\lambda_{k}-S_{k}|\leq a_{np}^{-2} \norm{XX^\T-D}_2 \ston{P} 0,
\label{Weyl}\end{align}
where $D=\diag(XX^\T)$. From \cref{pp conv inf order ma} we have
\begin{align}
\wh N_n = \sum_{i=1}^p  \eps_{a_{np}^{-2} (S_{i}-n\mu_{X,\alpha})} = \sum_{i=1}^p  \eps_{a_{np}^{-2} S_{(i)}} \ston{D} N. % \sum_{t=1}^n X_{it}^2
\label{pp conv max}\end{align}
Thus, by \cite[Theorem 4.2]{Kallenberg1983}, it suffices to show that
\[ P(|\wh N_n(f)-N_n(f)|>\eta) \leq P\left(\sum_{i=1}^p{\left|f\left(\frac{S_{(i)}}{a_{np}^2}\right)-f\left(\frac{\lambda_{(i)}}{a_{np}^2}\right)\right|}>\eta\right)\ston{} 0\]
for a nonnegative continuous function $f$ with compact support $\mathrm{supp}(f)\subset[c,\infty]$, for some $c>0$.
For convenience we set $f(x)=0$ if $x\leq 0$.
 Since $N((c/2,\infty])<\infty$ almost surely, we can choose some $i\in\N$ large enough such that the probability  $P(N((c/2,\infty])\geq i)<\delta/2$. By \eqref{pp conv max}, it follows that
$ P(a_{np}^{-2}S_{(i)}>c/2)=P(\wh N_n((c/2,\infty])\geq i)\to P(N((c/2,\infty])\geq i)$ and thus, for $n$ large enough, $P(a_{np}^{-2}S_{(i)}>c/2)<\delta$. Consequently, by \eqref{Weyl}, it follows that $P(a_{np}^{-2}\lambda_{(i)}\geq c)<2\delta$. Since $a_{np}^{-2}S_{(i)}\leq c/2$ and $a_{np}^{-2}\lambda_{(i)}< c$ imply that both  $f(a_{np}^{-2}M_{(k)})=0$ and $f(a_{np}^{-2}\lambda_{(k)})=0$ for all $k\geq i$, we obtain
\begin{align*}
P&\left(\sum_{j=1}^p{\left|f\left(\frac{S_{(j)}}{a_{np}^2}\right)-f\left(\frac{\lambda_{(j)}}{a_{np}^2}\right)\right|}>\eta\right) \leq  P\left(\sum_{j=1}^p{\left|f\left(\frac{S_{(j)}}{a_{np}^2}\right)-f\left(\frac{\lambda_{(j)}}{a_{np}^2}\right)\right|}>\eta,a_{np}^{-2}S_{(i)}>\frac{c}{2}\right) \nonumber\\
&+ P\left(\sum_{j=1}^p{\left|f\left(\frac{S_{(j)}}{a_{np}^2}\right)-f\left(\frac{\lambda_{(j)}}{a_{np}^2}\right)\right|}>\eta,a_{np}^{-2}S_{(i)}\leq\frac{c}{2},a_{np}^{-2}\lambda_{(i)}\geq c\right) \nonumber\\
&+ P\left(\sum_{j=1}^p{\left|f\left(\frac{S_{(j)}}{a_{np}^2}\right)-f\left(\frac{\lambda_{(j)}}{a_{np}^2}\right)\right|}>\eta,a_{np}^{-2}S_{(i)}\leq\frac{c}{2},a_{np}^{-2}\lambda_{(i)} < c\right) \nonumber\\
\leq& 3\delta + P\left(\sum_{j=1}^{i-1}{\left|f\left(\frac{S_{(j)}}{a_{np}^2}\right)-f\left(\frac{\lambda_{(j)}}{a_{np}^2}\right)\right|}>\eta\right),
\end{align*}
which becomes arbitrarily small due to equation \eqref{Weyl} and the fact that $f$ is uniformly continuous.
\end{proof}

In the case when the entries of $X$ are iid and have tail index $\alpha<2$, we can refine our techniques to weaken the assumptions on the growth of $p=p_n$, cf. \cref{th1b}.

\begin{proof}[Proof of \cref{th1b}]
By assumption $X=(Z_{it})$. First we consider the case (a) and assume that $\kappa\geq 1$. 
We will show that, for any fixed positive integer $k$,
\begin{align}
\frac{\lambda_{(k)}}{S_{(k)}} \ston{P} 1.
\label{claim iid}\end{align}
\Cref{claim iid,pp conv max and sum sq} then imply
\begin{align*}
\left|\frac{S_{(k)}}{a_{np}^2}-\frac{\lambda_{(k)}}{a_{np}^2}\right| = \left|1-\frac{\lambda_{(k)}}{S_{(k)}}\right|\frac{S_{(k)}}{a_{np}^2} \ston{P} 0,
\end{align*}
and hence $N_n\to N$ as in the proof of \cref{th1} (i). Define $M_i=\max_{1\leq t\leq n} X_{it}^2$ and denote by $M_{(1)}\geq\ldots\geq M_{(p)}$ the upper order statistics of $M_1,\ldots,M_p$. Observe that the continuous mapping theorem applied to \eqref{pp conv max and sum sq} and \eqref{pp conv max and sum abs} yields, for any fixed $k$,
\begin{align*}
\frac{S_{(k)}}{M_{(k)}} \ston{P} 1, \quad\tn{ and }\quad \frac{\norm{X}_\infty^2}{M_{(1)}} \ston{P} 1,
\end{align*}
because $\kappa\geq 1$. Now we start showing \eqref{claim iid} by induction. For $k=1$ we have, on the one hand, that
\begin{align*}
\frac{\lambda_{(1)}}{S_{(1)}} = \frac{\norm{X_n X_n^\T}_2}{S_{(1)}} \leq \frac{\norm{X_n}_2^2}{S_{(1)}} \leq \frac{\norm{X_n}_\infty^2}{S_{(1)}} = \frac{\norm{X_n}_\infty^2}{M_{(1)}} \frac{M_{(1)}}{S_{(1)}} \ston{P} 1.
\end{align*}
Let us denote by $e_1,\ldots,e_p$ the standard Euclidean orthonormal basis in $\R^p$ and by $i_1$ the (random) index that satisfies $S_{i_1}=S_{(1)}$. Then we have, on the other hand, by the Minimax Principle \cite[Corollary III.1.2]{Bhatia1997}, that
\begin{align*}
\frac{\lambda_{(1)}}{S_{(1)}} = \frac{\max_{v\in\R^p} \sp{v}{XX^\T v}}{S_{(1)}} \geq \frac{\sp{e_{i_1}}{XX^\T e_{i_1}}}{S_{(1)}} = \frac{S_{i_1}}{S_{(1)}} = 1.
\end{align*}
This shows \eqref{claim iid} for $k=1$. To keep the notation simple, we describe the induction step only for $k=2$. The arguments for the general case are exactly the same. Denote by $i_2$ the random index such that $S_{i_2}=S_{(2)}$. Let $X^{(2)}$ be the $(p-1)\times n$ matrix which is obtained from removing row $i_1$ from $X_n$ and denote by $\varrho_{(1)}$ the largest eigenvalue of $X^{(2)} (X^{(2)})^\T$. Since we have already shown the claim for the largest eigenvalue, it follows that $\varrho_{(1)}/S_{(2)}\to 1$ in probability. By the Cauchy Interlacing Theorem \cite[Corollary III.1.5]{Bhatia1997} this implies $\lambda_{(2)}/S_{(2)}\leq\varrho_{(1)}/S_{(2)}\to 1$. Another application of the Minimax Principle yields
\begin{align*}
\lambda_{(2)} = \max_{\substack{\mc{M}\subset\R^p\\ \mrm{dim}(\mc{M})=2}} \min_{\substack{v\in\mc{M} \\ \norm{v}=1}} v^\T X X^\T v \geq& \min_{\substack{v\in\mrm{span}\{e_{i_1},e_{i_2}\} \\ \norm{v}=1}} v^\T X X^\T v \\
=& \min_{\mu_1,\mu_2\in\R} (\mu_1^2+\mu_2^2)^{-1}  \left( \mu_1^2 S_{(1)} + \mu_2^2 S_{(2)} + 2\mu_1 \mu_2 (X X^\T)_{i_1 i_2} \right).
\end{align*}
Since, by \cref{lemma non-diag elemements} and equation \eqref{pp conv max and sum sq},
\[ \frac{\left| \frac{2\mu_1 \mu_2}{\mu_1^2+\mu_2^2} (X X^\T)_{i_1 i_2} \right|}{S_{(2)}} \leq \frac{a_{np}^{-2} \max_{1\leq i<j\leq p} \sum_{t=1}^n |Z_{it} Z_{jt}|}{a_{np}^{-2} S_{(2)}}  \ston{P} 0. \]
uniformly in $\mu_1,\mu_2\in\R$, an application of the the continuous mapping theorem finally yields that $\lambda_{(2)}/{S_{(2)}}\geq 1+o_P(1)$, where $o_P(1)\to 0$ in probability as $n\to\infty$. Thus the proof for $\kappa\geq 1$ is complete. Now let $\kappa\in(0,1)$. Since $X^\T X$ and $XX^\T$ have the same non-trivial eigenvalues, we consider the transpose $X^\T$ of $X$. This inverts the roles of $p$ and $n$. Therefore, using Potter's bounds and $1/\kappa>1$, the result follows from the same arguments as before. Note that we are in a special case of \cref{th1} (i) if $\kappa=0$. In case (b) we have that $n\sim (1/c\log(p/C))^{1/\kappa}$ is a slowly varying function in $p$, thus an application of \cref{th1} (ii) (a) to $X^\T$ gives the result.
\end{proof}

\subsection{Proof of \cref{th2}}\label{proof extension}

As we shall see, the proof of \cref{th2} will more or less follow the same lines of argument as given for \cref{th1}. We focus on the setting of \cref{th2} (i) here and mention (ii) and (iii) later. The next result is a generalization of \Cref{pp conv inf order ma} allowing for random coefficients.

\begin{proposition}\label{pp conv inf order ma random coeff}
Define $X=(X_{it})$ with $X_{it}$ satisfying \eqref{c_j bounded} and \eqref{def Xit gen}. Suppose \eqref{beta con} holds for some $\beta>0$. If $(\theta_i)$ is a stationary ergodic sequence, then, conditionally on $(\theta_i)$ as well as unconditionally, we have
\begin{align}
\sum_{i=1}^p  \eps_{a_{np}^{-2}(\sum_{t=1}^n X_{it}^2-n\mu_{X,\alpha})} \ston{D} \sum_{i=1}^\infty \eps_{\Gamma_i^{-2/\alpha}\left(E\left|\sum_{j=-\infty}^\infty c_j^2(\theta_1)\right|^{\alpha/2}\right)^{2/\alpha}}
\label{claim random coeff}\end{align}
with $\mu_{X,\alpha}$ and $(\Gamma_i)$ as given in \eqref{mu X alpha} and \eqref{N via Gamma}.
\end{proposition}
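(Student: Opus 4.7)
The plan is to condition on the latent sequence $(\theta_i)$. Under this conditioning, the rows of $X$ are independent linear processes with deterministic coefficients $c_j(\theta_i)$, all dominated by the fixed summable sequence $(\tilde{c}_j)$ by \eqref{c_j bounded}. The point process in \eqref{claim random coeff} therefore becomes a superposition of $p$ conditionally independent point masses, and its convergence to a Poisson limit will follow from the standard triangular-array criterion (cf.\ \cite[Proposition 3.21]{Resnick2008}). Accordingly, it suffices to establish, almost surely in $(\theta_i)$, that the conditional intensity converges,
\begin{equation*}
\Lambda_n(x) := \sum_{i=1}^p P\!\left(a_{np}^{-2}\Big(\sum_{t=1}^n X_{it}^2-n\mu_{X,\alpha}\Big)>x \,\Big|\, \theta_i\right) \longrightarrow x^{-\alpha/2}\, E\Big|\sum_j c_j^2(\theta_1)\Big|^{\alpha/2}
\end{equation*}
for every $x>0$, together with uniform asymptotic negligibility $\max_{1\leq i\leq p} P(\cdots>x|\theta_i)\to 0$. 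Here, for $\alpha\geq 2$, $\mu_{X,\alpha}$ is interpreted row by row, with $\sum_j c_j^2$ in \eqref{mu X alpha} replaced by $\sum_j c_j^2(\theta_i)$; for $\alpha<2$ no centering is needed.

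The first substantive step is a uniform-in-coefficients version of \cref{lemma pull c sum out} (for $\alpha<2$) and of \cref{pp conv inf order ma} (for $2\leq\alpha<4$): for every deterministic $(c_j^*)$ with $|c_j^*|\leq\tilde{c}_j$,
\begin{equation*}
p\,P\!\left(a_{np}^{-2}\Big(\sum_{t=1}^n \Big(\sum_j c_j^* Z_{1,t-j}\Big)^2-n\mu^*\Big)>x\right) - x^{-\alpha/2}\Big|\sum_j (c_j^*)^2\Big|^{\alpha/2} \longrightarrow 0
\end{equation*}
as $n\to\infty$, uniformly over all such sequences (with $\mu^*$ obtained by substituting $(c_j^*)$ for $(c_j)$ in \eqref{mu X alpha}). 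This uniformity can be extracted from the existing proofs: every error term there is bounded either by $\sum_j |c_j^*|^\delta\leq\sum_j \tilde{c}_j^\delta<\infty$ or by moments of the truncated noise $Z_{11}^L$ that depend only on the law of $Z_{11}$. Applying this row by row with $c_j^*=c_j(\theta_i)$ then gives
\begin{equation*}
\Lambda_n(x) = x^{-\alpha/2}\,\frac{1}{p}\sum_{i=1}^p\Big|\sum_j c_j^2(\theta_i)\Big|^{\alpha/2} + o(1)
\end{equation*}
almost surely, with a deterministic $o(1)$.

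The final ingredient is Birkhoff's ergodic theorem. Since $(\theta_i)$ is stationary ergodic and the functional $\theta\mapsto|\sum_j c_j^2(\theta)|^{\alpha/2}$ is integrable (bounded by $(\sum_j\tilde{c}_j^2)^{\alpha/2}$ for $\alpha\geq 2$, and controlled by $\sum_j\tilde{c}_j^\delta$ via Jensen with exponent $\alpha/2<1$ for $\alpha<2$), the empirical average converges a.s.\ to $E|\sum_j c_j^2(\theta_1)|^{\alpha/2}$, proving the intensity convergence. Uniform asymptotic negligibility is immediate from the uniform tail estimate divided by $p\to\infty$. Together these deliver conditional Laplace-functional convergence a.s.\ in $(\theta_i)$, which is precisely \eqref{cond pp convergence}; the unconditional version then follows by dominated convergence because conditional Laplace functionals are bounded by $1$.

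The main technical obstacle will be securing the uniform-in-$\theta$ large-deviation estimate in the range $2\leq\alpha<4$. The proof of \cref{pp conv inf order ma} there proceeds through a delicate case analysis of index overlaps, Karamata-based truncations, and third- and fourth-moment computations for centered variables, and one must re-track every implicit constant to ensure it depends only on $(\tilde{c}_j)$ and on the distribution of $Z_{11}$, never on the particular sequence $(c_j(\theta_i))_j$; the truncation-of-the-tail argument that lets $m\to\infty$ likewise has to be made uniform in the coefficients. Once this bookkeeping is carried out, the conditional Poisson convergence and the ergodic averaging are entirely routine.
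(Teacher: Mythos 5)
Your proposal is correct and follows essentially the same route as the paper: condition on $(\theta_i)$, establish a large-deviation/intensity estimate for the conditional row tails that is uniform over the coefficient sequences dominated by $(\tilde c_j)$, average via Birkhoff's ergodic theorem to get $x^{-\alpha/2}E|\sum_j c_j^2(\theta_1)|^{\alpha/2}$, conclude conditional Poisson convergence for the independent (non-identically distributed) points, and take expectations for the unconditional claim. The only differences are presentational: the paper verifies the Poisson limit by hand through the product formula for the conditional Laplace functional (controlling $p^{-1}\sum_i B_{i,p}$ and $p^{-2}\sum_i B_{i,p}^2$) rather than citing a triangular-array criterion, and it obtains the needed uniformity from monotonicity in $x$ of $pP(\sum_t Z_{1t}^2>a_{np}^2x)$ together with the bound $d_i\le\sum_j\tilde c_j^2$, rather than by re-tracking constants through \cref{lemma pull c sum out} and \cref{lemma pp conv}.
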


\begin{proof}
We prove the cases $0<\alpha<2$ and $2\leq\alpha<4$ separately.\\

Let $0<\alpha<2$. We first prove that, conditionally on $(\theta_i)$,
\begin{align}
  \sum_{i=1}^p  \eps_{a_{np}^{-2}\sum_{t=1}^n \sum_j c_j^2(\theta_i) Z_{i,t-j}^2} \ston{D} \sum_{i=1}^\infty \eps_{\Gamma_i^{-2/\alpha} \left(E\left|\sum_{j} c_j^2(\theta_1)\right|^{\alpha/2}\right)^{2/\alpha} }
\label{claim_ppconv}\end{align}
by showing a.s. convergence of the Laplace functionals. By arguments from the proof of \cite[Proposition 3.17]{Resnick2008} it suffices to show \eqref{cond pp convergence} only for a countable subset of the space of all nonnegative continuous functions with compact support. Thus we fix one nonnegative continuous function $f$ with compact support $\mrm{supp}(f)\subset [c,\infty]$, $c>0$. Conditionally on the process $(\theta_m)$, the points of the point process are independent, and thus
\begin{align}
E\left(e^{-\sum_{i=1}^p f(a_{np}^{-2}\sum_{t=1}^n \sum_j c_j^2(\theta_i) Z_{i,t-j}^2)} \big| (\theta_m) \right) 
&= \prod_{i=1}^p \left(1-\frac{1}{p}\int{(1-e^{-f(x)})pP\left(a_{np}^{-2}\sum_{t=1}^n \sum_j c_j^2(\theta_i) Z_{1,t-j}^2\in dx\Big|\theta_i\right)}\right) \nonumber\\
&= \prod_{i=1}^p \left(1-\frac{1}{p}B_{i,p}\right), \label{ext prod}
\end{align}
where $B_{i,p}=\int{(1-e^{-f(x)})pP(a_{np}^{-2}\sum_{t=1}^n \sum_j c_j^2(\theta_i) Z_{1,t-j}^2\in dx|\theta_i)}$. % We are going to finish this proof assuming that
First assume
\begin{align}
\frac{1}{p} \sum_{i=1}^p B_{i,p} \ston{a.s.} B\coloneqq\int{(1-e^{-f(x)})\nu(dx)} % \textnormal{\ \ with\ \ } \nu(x,\infty)\coloneqq x^{-\alpha/2} E|d_1|^{\alpha/2}
\label{avg of int}\end{align}
with $\nu$ given by $\nu((x,\infty])\coloneqq x^{-\alpha/2} E\left|\sum_{j} c_j^2(\theta_1)\right|^{\alpha/2}$, %as well as
and
\begin{align}
\frac{1}{p^2} \sum_{i=1}^p B_{i,p}^2 \ston{a.s.} 0.
\label{avg of int sq}\end{align}
Both claims will be justified later. By assumption \eqref{c_j bounded}, we have, using \cref{lemma pull c sum out}, almost surely
\begin{align*}
 B_{i,p}\leq pP\left(a_{np}^{-2}\sum_{t=1}^n \sum_j \tilde{c}_j^2 Z_{1,t-j}^2 > c\right) \ston{} c^{-\alpha/2} \left|\sum_j \tilde{c}_j^2\right|^{\alpha/2},
\end{align*}
and hence there exists a $C>0$ such that $B_{i,p}\leq C$ for all $i,p\in\N$ a.s. The elementary inequality $e^{\frac{-x}{1-x}}\leq 1-x\leq e^{-x} \ \forall x\in[0,1]$, equivalently $e^{\frac{-x^2}{1-x}}\leq {(1-x)}{e^{x}}\leq 1\ \forall x\in[0,1]$, implies together with \eqref{avg of int sq}, for some $c_1>0$, that
\begin{align*}
1\geq \prod_{i=1}^p{{\left(1-\frac{B_{i,p}}{p}\right)}{e^{\frac{B_{i,p}}{p}}}} \geq %\prod_{i=1}^p{e^{-\frac{B_{i,p}^2/p^2}{1-\frac{B_{i,p}}{p}}}} = 
\prod_{i=1}^p{e^{-\frac{B_{i,p}^2}{p^2-pB_{i,p}}}} \geq \prod_{i=1}^p{e^{-\frac{B_{i,p}^2}{p^2-pC}}} \geq %\prod_{i=1}^p{e^{-c_1\frac{B_{i,p}^2}{p^2}}}  = 
e^{\frac{-c_1}{p^2}\sum_{i=1}^p{B_{i,p}^2}} \ston{a.s.} 1.
\end{align*}
As a consequence we have that the product in \eqref{ext prod} is asymptotically equivalent to
\begin{align*}
% \prod_{i=1}^p \left(1-\frac{1}{p}B_{i,p}\right) \sim 
\prod_{i=1}^p e^{-\frac{1}{p}B_{i,p}} = e^{-\frac{1}{p} \sum_{i=1}^p B_{i,p}} \ston{a.s.} e^{-B} = e^{-\int{(1-e^{-f(x)})\nu(dx)}},
\end{align*}
where the convergence follows from \eqref{avg of int}. This implies the almost sure convergence of the conditional Laplace functionals, therefore \eqref{claim_ppconv} holds conditionally on $(\theta_i)$. Using \eqref{c_j bounded} one shows similarly as in the proof of \cref{pp conv inf order ma}, conditionally on $(\theta_i)$, that \eqref{claim_ppconv} implies \eqref{claim random coeff}. Taking the expectation yields that \eqref{claim random coeff} also holds unconditionally.

\textit{Proof of \eqref{avg of int} and \eqref{avg of int sq}.}  As a function in $x$, $pP(\sum_{t=1}^n Z_{1t}^2 > a_{np}^2 x)$ is decreasing and converges pointwise to the continuous function $x^{-\alpha/2}$ as $n\to\infty$. Therefore this convergence is uniform on compact intervals of the form $[x_0,\infty]$ with $x_0>0$. Now fix $x>0$ and let $d_i=\sum_j c_j^2(\theta_i)$. Since $d_i\leq d=\sum_j \tilde{c}_j^2<\infty$ for all $i\in\N$, $\frac{x}{d_i}\geq\frac{x}{d}>0$ is bounded from below, and thus
\begin{align}
 \sup_{i\in\N} \left| pP\left(\sum_{t=1}^n Z_{1t}^2 > a_{np}^2 \frac{x}{d_i} \Big| d_i\right) - x^{-\alpha/2} d_i^{\alpha/2}  \right| \ston{a.s.} 0.
\label{convergence with d_i}\end{align}
Since $(d_i)$ is an instantaneous function of the ergodic sequence $(\theta_i)$, it is also ergodic and thus
\begin{align}
\frac{1}{p}\sum_{i=1}^p{d_i^{\alpha/2} \ston{a.s.} E|d_1|^{\alpha/2}}.
\label{ergodic property}\end{align}
As a consequence of \eqref{convergence with d_i} and \eqref{ergodic property} we obtain
\begin{align*}
&\left| \frac{1}{p}\sum_{i=1}^p{pP\left(\sum_{t=1}^n Z_{1t}^2 > a_{np}^2 \frac{x}{d_i} \Big| d_i\right)} - x^{-\alpha/2} E|d_1|^{\alpha/2} \right| \ston{a.s.} 0.
\end{align*}
Then it is straightforward to show, as in the proof of \cref{lemma pull c sum out}, using \eqref{c_j bounded}, that
\begin{align*}
\frac{1}{p}\sum_{i=1}^p{pP\left(\sum_{t=1}^n \sum_j c_j(\theta_i) Z_{1,t-j}^2 > a_{np}^2 {x} \Big| \theta_i\right)} \ston{a.s.}  x^{-\alpha/2} E|d_1|^{\alpha/2}.
\end{align*}
The vague convergence of above sequence of measures implies $p^{-1} \sum_{i=1}^p B_{i,p} \to B$ almost surely. In exactly the same way one can show that $p^{-1} \sum_{i=1}^p B_{i,p}^2$ converges, thus $p^{-2} \sum_{i=1}^p B_{i,p}^2 \to 0$ a.s., which establishes \eqref{avg of int} and \eqref{avg of int sq} as claimed.\\

Let $2\leq\alpha<4$. 
As before one can show, for any $m<\infty$, that
\begin{align*} \frac{1}{p}\sum_{i=1}^p{pP\left(\sum_{t=1}^n \sum_{j=-m}^m c_j(\theta_i) (Z_{i,t-j}^2-\mu_Z)> a_{np}^2 {x} \Big| \theta_i\right)} \ston{a.s.}  x^{-\alpha/2} E|d_1^m|^{\alpha/2},
\end{align*}
where $d_1^m=\sum_{j=-m}^m c_j^2(\theta_1)$. Hence, an adaptation of the proof of \cref{lemma pp conv} yields, for the truncated process
\[ X_{it,m}=\sum_{k=-m}^m c_k Z_{i,t-k}, \quad \mu_{X,m}=EX_{11,m}^2=\sum_{j=-m}^m c_j^2 \mu_Z, \]
that, conditionally on the sequence $(\theta_i)$,
 \begin{align}\label{pp conv random coef}
 \sum_{i=1}^p \eps_{a_{np}^{-2}\left(\sum_{t=1}^n X_{it,m}^2-n\mu_{X,m}\right)} \to \sum_{i=1}^\infty \eps_{\Gamma_i^{-2/\alpha} \left(E\left|\sum_{j=-m}^m c_j^2(\theta_1)\right|^{\alpha/2}\right)^{2/\alpha}}.
\end{align} 
It is only left to show that this result extends to the more general setting where $m=\infty$. By \cref{pp conv inf order ma} it suffices to show that
\begin{align*}
\lim_{m\to\infty}\limsup_{n\to\infty}\sum_{i=1}^p P\left( \left| \sum_{t=1}^n (X_{it}^2-X_{it,m}^2-(\mu_X-\mu_{X,m})) \right| > a_{np}^2\gamma\Big|(\theta_r)\right)=0.
\end{align*}
To proof this claim, follow the string of arguments of \cref{pp conv inf order ma} and make use of the fact that
\[ \left| \sum_{i=1}^p c_j(\theta_i) \right| \leq p \tilde{c}_j. \]
%Taking expectations shows that \eqref{pp conv random coef2} also holds unconditionally.
\end{proof}

\begin{proof}[Proof of \cref{th2}]
\textit{Proof of (i).} If we condition on $(\theta_i)$, the proofs of \cref{th conv op norm,th conv op norm 2} easily carry over to this more general setting when we make use of assumption \eqref{c_j bounded}. Taking the expectation then yields convergence in operator norm unconditionally. A combination of this together with \cref{pp conv inf order ma random coeff} completes the proof.

\textit{Proof of (ii).} Note that \eqref{ergodic property} is the only step in the proof of \Cref{pp conv inf order ma random coeff} where we use the ergodicity of the sequence $(\theta_i)$. But also if $(\theta_i)$ is just stationary, the ergodic theorem implies that the average in \eqref{ergodic property} converges to the random variable $Y=E\left(|d_1|^{\alpha/2}|\mathcal{G}\right)$, where $\mathcal{G}$ is the invariant $\sigma$-field generated by $(\theta_i)$. By construction, $Y$ depends on $\alpha$ and $c_j(\cdot)$, but it is independent of $(\Gamma_i)$, since $(\theta_i)$ is independent of $(Z_{it})$.

\textit{Proof of (iii).} In this setting $(\theta_i)$ is a Markov chain which may not be stationary. But since we derive all results in the proof of \cref{th2} (i) conditionally on $(\theta_i)$ and then take the expectation, stationarity is in fact not needed. 
The theory on Markov chains, see \cite{Meyn2009}, in particular their Theorem 17.1.7 for Markov chains on uncountable state spaces, shows that \eqref{ergodic property} holds if the expectation is taken with respect to the stationary distribution of the Markov chain.
\end{proof}

\section*{Acknowledgements}
R.D. was supported in part by NSF Grant DMS-1107031 and the Institute for Advanced Study at Technische Universit\"at M\"unchen. O.P thanks the Technische Universität München - Institute for Advanced Study, funded by the German Excellence Initiative, and the International Graduate School of Science and Engineering for financial support. R.S was partially supported by the Technische Universität München - Institute for Advanced Study, funded by the German Excellence Initiative

\bibliographystyle{spmpsci}
%\bibliographystyle{abbrvnat}
%\bibliography{D:/Dropbox/Uni/Theses/Dissertation/references}

\end{document}